\numberwithin{equation}{section}
\newtheorem{theorem}{Theorem}[section]
\newtheorem{proposition}[theorem]{Proposition}%[section]
\newtheorem{lemma}[theorem]{Lemma}%[section]
\theoremstyle{remark}\newtheorem{remark}[theorem]{Remark}%[section]
\begin{document}
\title[Strichartz estimates and nonlinear wave equation]{Strichartz estimates and nonlinear wave equation on nontrapping
asymptotically conic manifolds}

\author{Junyong Zhang}
\address{Department of Mathematics, Beijing Institute of Technology, Beijing 100081 China,
and Department of Mathematics, Australian National University,
Canberra ACT 0200, Australia} \email{zhang\_junyong@bit.edu.cn} \maketitle

\begin{abstract} We prove the global-in-time Strichartz estimates for wave equations on the nontrapping
asymptotically conic manifolds. We obtain estimates for the full set
of wave admissible indices, including the endpoint. The key points
are the properties of the microlocalized spectral measure of
Laplacian on this setting showed in \cite{HZ} and a Littlewood-Paley
squarefunction estimate. As applications, we prove the global
existence and scattering for a family of nonlinear wave equations on
this setting.
\end{abstract}

\begin{center}
 \begin{minipage}{120mm}
   { \small {\bf Key Words: Strichartz estimate, Asymptotically conic manifold, Spectral measure, Global existence, Scattering theory }
      {}
   }\\
    { \small {\bf AMS Classification:}
      { 35Q40, 35S30, 47J35.}
      }
 \end{minipage}
 \end{center}
\section{Introduction and Statement of Main Results}
Let $(M^\circ, g)$ be a Riemannian manifold of dimension $n\geq2$,
and let $I\subset\R$ be a time interval. Suppose $u(t,z)$: $I\times
M^\circ\rightarrow\mathbb{C}$ to be the solutions of the wave
equation
\begin{equation*}
\partial_{t}^2u+\mathrm{H} u=0, \quad u(0)=u_0(z), ~\partial_tu(0)=u_1(z)
\end{equation*}
where $\mathrm{H}=-\Delta_g$ denotes the minus Laplace-Beltrami
operator on $(M^\circ, g)$. The general homogeneous Strichartz
estimates read
\begin{equation*}
\|u(t,z)\|_{L^q_tL^r_z(I\times M^\circ)}\leq
C\big(\|u_0\|_{H^s(M^\circ)}+\|u_1\|_{H^{s-1}(M^\circ)}\big),
\end{equation*}
where $H^s$ denotes the $L^2$-Sobolev space over $M^\circ$, and
$2\leq q,r\leq\infty$ satisfy
\begin{equation*}
s=n(\frac12-\frac 1r)-\frac1q,\quad
\frac{2}q+\frac{n-1}r\leq\frac{n-1}2,\quad (q,r,n)\neq(2,\infty,3).
\end{equation*}

In the flat Euclidean space, where $M^\circ=\R^n$ and
$g_{jk}=\delta_{jk}$, one can take $I=\R$; see Strichartz
\cite{Str}, Ginibre and Velo \cite{GV}, Keel and Tao \cite{KT}, and
references therein. In general manifolds, for instance the compact
manifold with or without boundary, most of the Strichartz estimates
are local in time. If $M^\circ$ is a compact manifold without
boundary, due to finite speed of propagation one usually works in
coordinate charts and establishes local Strichartz estimates for
variable coefficient wave operators on $\R^n$. See for examples
\cite{Kap, Hart1, Tataru}. Strichartz estimates also are considered
on compact manifold with boundary, see \cite{BLP}, \cite{BHS} and
references therein. When we consider the noncompact manifold with
nontrapping condition, one can obtain global-in-time Strichartz
estimates. For instance, when $M^\circ$ is a exterior manifold in
$\mathbb{R}^n$ to a convex obstacle, for metrics $g$ which agree
with the Euclidean metric outside a compact set with nontrapping
assumption, the global Strichartz estimates are obtained by
Smith-Sogge \cite{HS} for odd dimension, and Burq \cite{Burq} and
Metcalfe \cite{Met} for even dimension. Blair-Ford-Marzuola
\cite{BFM} established global Strichartz estimates for the wave
equation on flat cones $C(\mathbb{S}_{\rho}^1)$ by using the
explicit representation of the fundamental solution.\vspace{0.2cm}

In this paper, we consider the establishment of global-in-time
Strichartz estimates on asymptotically conic manifolds satisfying
a nontrapping condition. Here,  `asymptotically conic' is meant in the
sense that $M^\circ$ can be compactified to a manifold with boundary
$M$ such that $g$ becomes a scattering metric on $M$. On the nontrapping asymptotically conic manifolds, Hassell,
Tao, and Wunsch first established an $L^4_{t,z}$-Strichartz
estimate for  Schr\"odinger equation in \cite{ HTW1} and then they \cite{HTW} extended the
estimate to full admissible Strichartz exponents except endpoint
$q=2$. More precisely, they obtained the local-in-time Strichartz inequalities for non-endpoint
Schr\"odinger admissible pairs $(q,r)$
\begin{equation*}
\|e^{it\Delta_g}u_0\|_{L^q_tL^r_z([0,1]\times M^\circ)}\leq
C\|u_0\|_{L^2(M^\circ)}.
\end{equation*}
Recently, Hassell and the author \cite{HZ} improved the Strichartz
inequalities by replacing the interval $[0,1]$ by $\R$. The purpose
of this article is to extend the above investigations carried out
for Schr\"odinger to wave equations.\vspace{0.2cm}

Let us recall the asymptotically conic geometric setting
(i.e. scattering manifold), which is the same as in
\cite{GHS1,GHS2,HW1,HTW,HZ}. Let $(M^\circ,g)$ be a complete
noncompact Riemannian manifold of dimension $n\geq2$ with one end,
diffeomorphic to $(0,\infty)\times Y$ where $Y$ is a smooth compact
connected manifold without boundary. Moreover, we assume
$(M^\circ,g)$ is asymptotically conic which
means that $M^\circ$ allows a compactification $M$ with
boundary, with $\partial M=Y$, such that the metric $g$ becomes an
asymptotically conic metric on $M$. In details, the metric $g$ in a
collar neighborhood $[0,\epsilon)_x\times \partial M$ near $Y$ takes
the form of
\begin{equation}\label{metric}
g=\frac{\mathrm{d}x^2}{x^4}+\frac{h(x)}{x^2}=\frac{\mathrm{d}x^2}{x^4}+\frac{\sum
h_{jk}(x,y)dy^jdy^k}{x^2},
\end{equation}
where $x\in C^{\infty}(M)$ is a boundary defining function for
$\partial M$ and $h$ is a smooth family of metrics on $Y$. Here we
use $y=(y_1,\cdots,y_{n-1})$ for local coordinates on $Y=\partial
M$, and the local coordinates $(x,y)$ on $M$ near $\partial M$. Away
from $\partial M$, we use $z=(z_1,\cdots,z_n)$ to denote the local
coordinates. If $h_{jk}(x,y)=h_{jk}(y)$ is independent of $x$, we say
$M$ is perfectly conic near infinity. Moreover if every geodesic
$z(s)$ in $M$ reaches $Y$ as $s\rightarrow\pm\infty$, we say $M$ is
nontrapping. The function $r:=1/x$ near $x=0$ can be thought of as a
``radial" variable near infinity and $y$ can be regarded as the $n-1$
``angular" variables; the metric is asymptotic to the exact
conic metric $((0,\infty)_r\times Y, dr^2+r^2h(0))$ as $r\rightarrow
\infty$. The Euclidean space $M^\circ=\mathbb{R}^n$ is an example of
an asymptotically conic manifold with $Y=\mathbb{S}^{n-1}$ and the
standard metric. However a metric cone itself is not an
asymptotically conic manifold because of its cone point. We remark
that the Euclidean space is a perfectly metric nontrapping cone,
where the cone point is a removable singularity.\vspace{0.2cm}

Let $\dot H^s(M^\circ)={(-\Delta_g)}^{-\frac{s}2}L^2(M^\circ)$ be
the homogeneous Sobolev space over $M^\circ$. Throughout this paper,
pairs of conjugate indices are written as $r, r'$, where
$\frac{1}r+\frac1{r'}=1$ with $1\leq r\leq\infty$. Our main result
concerning Strichartz estimates is the following.

\begin{theorem}[Global-in-time Strichartz estimate]\label{Strichartz} Let $(M^\circ,g)$ be an asymptotically conic non-trapping manifold of dimension
$n\geq3$. Let $\mathrm{H}=-\Delta_g$ and suppose that $u$ is the
solution to the Cauchy problem
\begin{equation}\label{eq}
\begin{cases}
\partial_{t}^2u+\mathrm{H} u=F(t,z), \quad (t,z)\in I\times M^\circ; \\ u(0)=u_0(z),
~\partial_tu(0)=u_1(z),
\end{cases}
\end{equation}
for some initial data $u_0\in \dot H^{s}, u_1\in \dot H^{s-1}$, and
the time interval $I\subseteq\R$, then
\begin{equation}\label{stri}
\begin{split}
&\|u(t,z)\|_{L^q_t(I;L^r_z(M^\circ))}+\|u(t,z)\|_{C(I;\dot H^s(M^\circ))}\\
&\qquad\lesssim \|u_0\|_{\dot H^s(M^\circ)}+\|u_1\|_{\dot
H^{s-1}(M^\circ)}+\|F\|_{L^{\tilde{q}'}_t(I;L^{\tilde{r}'}_z(M^\circ))},
\end{split}
\end{equation}
where the pairs $(q,r), (\tilde{q},\tilde{r})\in [2,\infty]^2$
satisfy the wave-admissible condition
\begin{equation}\label{adm}
\frac{2}q+\frac{n-1}r\leq\frac{n-1}2,\quad (q,r,n)\neq(2,\infty,3).
\end{equation}
and the gap condition
\begin{equation}\label{scaling}
\frac1q+\frac nr=\frac n2-s=\frac1{\tilde{q}'}+\frac
n{\tilde{r}'}-2.
\end{equation}
\end{theorem}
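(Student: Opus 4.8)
The plan is to reduce the wave estimate to properties of the spectral measure established in \cite{HZ}, following the by-now standard strategy of deducing Strichartz estimates from dispersive/frequency-localized bounds together with the abstract Keel--Tao machinery. First I would reduce to the homogeneous estimate with $F=0$ and to a single Littlewood--Paley frequency block: write $u$ via the half-wave propagators $e^{\pm it\sqrt{\mathrm{H}}}$, insert a dyadic partition $1=\sum_{j}\varphi_j(\sqrt{\mathrm{H}})$ in the spectral variable, and use the square-function estimate (the Littlewood--Paley inequality alluded to in the abstract) to pass from $\|u\|_{L^q_tL^r_z}$ to $\big(\sum_j\|\varphi_j(\sqrt{\mathrm{H}})u\|_{L^q_tL^r_z}^2\big)^{1/2}$ for $2\le r<\infty$; the inhomogeneous term is then recovered by the Christ--Kiss\-lev lemma and a $TT^*$/duality argument once the homogeneous bound is in hand. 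Rescaling each block to unit frequency by the natural dilation structure of the cone reduces matters to proving, uniformly in $j$, a frequency-localized estimate at frequency $1$.

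The core of the argument is to obtain, for the frequency-$1$-localized half-wave propagator, the microlocal dispersive estimates. Here I would invoke the structure theorem of \cite{HZ} for the microlocalized spectral measure $dE_{\sqrt{\mathrm{H}}}(\lambda)$: it decomposes as a sum of terms, each either uniformly bounded with kernel supported near the diagonal, or of the oscillatory form $e^{\pm i\lambda d(z,z')/\text{(suitable phase)}}\,a_\pm(\lambda,z,z')$ with symbolic amplitude satisfying $|a_\pm|\lesssim \lambda^{n-1}(1+\lambda d(z,z'))^{-(n-1)/2}$, where $d$ is the Riemannian distance. Composing this representation with the dyadic cutoff $\varphi(\lambda)$ and integrating $dE$ against $e^{\pm it\lambda}$ over $\lambda\sim 1$ yields, after stationary phase in $\lambda$, the fixed-time $L^1\to L^\infty$ bound $\|\varphi(\sqrt{\mathrm{H}})\,e^{it\sqrt{\mathrm{H}}}\|_{L^1\to L^\infty}\lesssim (1+|t|)^{-(n-1)/2}$, which is exactly the cone/wave dispersive decay. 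Interpolating with the trivial $L^2\to L^2$ bound and feeding the resulting "decay estimate with exponent $(n-1)/2$" into the abstract Keel--Tao theorem produces the full range of wave-admissible pairs \eqref{adm}, including the endpoint $q=2$ (for $n\ge 4$; the $n=3$ endpoint is excluded in the hypothesis), with the correct homogeneous scaling.

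Two points require care and constitute the main obstacles. The first is the \emph{endpoint} $L^2_tL^r_z$ estimate: the Keel--Tao endpoint argument needs the bilinear form $\langle U(s)U(t)^* F, G\rangle$ to obey off-diagonal decay in $|s-t|$ \emph{and} the diagonal $L^2$ bound simultaneously, and because the spectral-measure decomposition has both a "microlocally near-diagonal" piece and genuinely propagating pieces with variable-coefficient geometry, one must check that the near-diagonal terms do not destroy the $(1+|t-s|)^{-(n-1)/2}$ decay — this is where the precise support properties and the $\lambda$-derivative bounds from \cite{HZ} are essential. The second obstacle is assembling the dyadic pieces: the square-function estimate must hold on $L^r(M^\circ)$ for the relevant $r$, which on an asymptotically conic manifold is not automatic and relies on the heat kernel / Gaussian bounds and the spectral multiplier theorem available in this setting; and to recover $\|u\|_{C(I;\dot H^s)}$ one combines conservation of the energy $\|u_0\|_{\dot H^s}^2+\|u_1\|_{\dot H^{s-1}}^2$ with the inhomogeneous Strichartz bound via duality. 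The passage from local-in-time to global-in-time is built in because the dispersive estimate above holds for all $t\in\R$ with no loss, so once the frequency-localized global bound is established, summing over dyadic blocks with the square function closes the argument on $I=\R$.
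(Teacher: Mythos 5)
Your overall strategy --- square-function reduction, dispersive estimate from the structure of the spectral measure, Keel--Tao, Christ--Kiselev --- matches the paper's skeleton, but there is a genuine gap in the central step. You claim a fixed-time bound
$\|\varphi(\sqrt{\mathrm{H}})\,e^{it\sqrt{\mathrm{H}}}\|_{L^1\to L^\infty}\lesssim (1+|t|)^{-(n-1)/2}$
for the \emph{full} frequency-localized propagator. This cannot be deduced from the microlocalized spectral measure estimates of \cite{HZ}, and in fact is expected to fail: the representation \eqref{bean}--\eqref{beans} is for $Q_i(\lambda)\,dE_{\sqrt{\mathrm H}}(\lambda)\,Q_i(\lambda)^*$ only, not for $dE_{\sqrt{\mathrm H}}(\lambda)$ itself or for cross terms $Q_i\,dE\,Q_k^*$ with $i\neq k$, and the undecomposed propagator loses dispersive decay at conjugate points. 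The paper therefore never forms the full dispersive bound. Instead it keeps the microlocal cutoffs throughout: it defines the pieces $U_{i,j}(t)=\int e^{it\lambda}\varphi(2^{-j}\lambda)\chi^{\pm}(\lambda)Q_i(\lambda)\,dE_{\sqrt{\mathrm H}}(\lambda)$, proves the $L^1\to L^\infty$ decay only for the $TT^*$ kernels $U_{i,j}(t)\,U_{i,j}(s)^*$ (Proposition \ref{prop:dispersive}--\ref{prop:Dispersive}, where the $Q_i\,dE\,Q_i^*$ structure is exactly what appears), runs the abstract Keel--Tao argument for each $(i,j)$, and sums over the finite set of $i$ at the end. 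Your near-diagonal caveat for the endpoint is on the right track, but it does not address this structural point: the dispersive estimate must be asserted at the $TT^*$ level of a single microlocal piece, not for the global propagator.

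A second, smaller issue is the statement that one can ``rescale each block to unit frequency by the natural dilation structure of the cone.'' An asymptotically conic manifold has no global dilation symmetry; only the ideal cone at infinity does. The paper handles the $j$-dependence not by rescaling but by a semiclassical version of the Keel--Tao theorem (Proposition \ref{prop:semi}) with semiclassical parameter $h=2^{-j}$, fed by the microlocalized dispersive bound $C\,2^{j(n+1)/2}(2^{-j}+|t-s|)^{-(n-1)/2}$; this tracks the correct power $2^{j[n(\frac12-\frac1r)-\frac1q]}$ without ever invoking an exact scaling of the manifold. Your conclusion and the roles you assign to the square-function estimate and Christ--Kiselev are correct, but the argument as written would not close because of these two points.
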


\begin{remark}We remark that the estimates are sharp from the
sharpness in \cite{KT} for the Euclidean space. There is no loss of
derivatives. We can take the interval $I=\R$ which means the
estimates are global in time.
\end{remark}

We sketch the proof as follows. Our strategy is to use the abstract
Strichartz estimate proved in Keel-Tao \cite{KT} and our previous argument \cite{HZ} for Schr\"odinger.  Thus,
with $U(t)$ denoting the (abstract) propagator, we need to show
uniform $L^2\rightarrow L^2$ estimate for $U(t)$, and
$L^1\rightarrow L^\infty$ type dispersive estimate on the $U(t)
U(s)^*$ with a bound of the form $O(|t-s|^{-(n-1)/2})$. In the flat
Euclidean setting, the estimates are considerably simpler because of
the explicit formula of the spectral measure. But in our general
setting, the estimates turn out to be more complicated. It follows from
\cite{HW1} that the Schr\"odinger propagator $e^{it\Delta_g}$ fails
to satisfy such a dispersive estimate at any pair of conjugate
points $(z,z') \in M^\circ \times M^\circ$ (i.e. pairs $(z,z')$
where a geodesic emanating from $z$ has a conjugate point at $z'$),
so we need localize the propagator such that the conjugating points
are separated. One may avoid the conjugated points in a
sufficiently short time by using the finite speed of propagation $U(t)(z,z')$. If we do this, we would only obtain the
local-in-time Strichartz estimates. We instead overcome the difficulties
caused by conjugate points by microlocalizing the spectral
measure \cite{HZ},  which is in the same spirit of the proof in
\cite{GHS2} of a \emph{restriction estimate} for the spectral
measure, that is, an estimate of the form
$$
\big\| dE_{\sqrt{\mathbf{H}}}(\lambda) \big\|_{L^p(M^\circ) \to
L^{p'}(M^\circ)} \leq C \lambda^{n(\frac1{p} - \frac1{p'}) - 1},
\quad 1 \leq p \leq \frac{2(n+1)}{n+3}.
$$
However, the microlocalized spectral measure $Q_i(\lambda)
dE_{\sqrt{\mathbf{H}}}(\lambda) Q_i(\lambda)^*$ only has a size
estimate in \cite{GHS2}, where $Q_i(\lambda)$ is a member of a
partition of the identity operator in $L^2(M^\circ)$.  To obtain the
dispersive estimate, the authors \cite{HZ} refined the
microlocalized spectral measure by capturing its oscillatory
behavior. Thus we efficiently exploit the oscillation of the
`spectral multiplier' $e^{it\lambda^2}$ and microlocalized spectral
measure to prove the dispersive estimate for Schr\"odinger.
However, the multiplier $e^{it\lambda}$ corresponding to the wave
equation has much less oscillation than the Schr\"odinger multiplier
$e^{it\lambda^2}$ at high frequency, so we need to modify the argument. Because of this, we have to resort to a
Littlewood-Paley squarefunction estimate on this setting. We remark
that the authors \cite{HZ} avoid using the Littlewood-Paley squarefunction estimate in
the Schr\"odinger case. We prove the Littlewood-Paley squarefunction
estimate on this setting by using a spectral multiplier estimate
in Alexopoulos \cite{Alex} and Stein's \cite{Stein} classical
argument involving Rademacher functions. The crucial ingredient is
to obtain the Gaussian upper bounds on the heat kernel on this
setting. We show the Gaussian upper bounds on the heat kernel by
using the local-in-time heat kernel bounds in Cheng-Li-Yau
\cite{CLY}, and Guillarmou-Hassell-Sikora's \cite{GHS2} restriction
estimate for low frequency which implies the long-time bounds. Having the squarefunction estimate, we
reduce Theorem \ref{Strichartz}
to prove a frequency-localized estimate. To do this, we define a
microlocalized half-wave propagator and prove that it satisfies $L^2\rightarrow L^2$-bounded
and dispersive estimate. We prove the homogeneous Strichartz
estimates for the microlocalized half-wave propagator by using a
semiclassical version of Keel-Tao's argument. The Strichartz
estimate for $e^{it\sqrt{\mathrm{H}}}$ then follows by summing
each microlocalizing piece. The inhomogeneous Strichartz estimates follow from the
homogeneous estimates and the Christ-Kiselev lemma. Compared with
the establishment of Schr\"odinger inhomogeneous Strichartz estimate
in \cite{HZ}, we do not require additional argument since one must have $q>\tilde{q}'$ if both
$(q,r)$ and $(\tilde{q},\tilde{r})$ satisfy \eqref{adm} and
\eqref{scaling}. \vspace{0.2cm}

As an application of the Strichartz estimates, we note that these
inequalities can be utilized to generalize a theorem of
Lindblad-Sogge \cite{LS} on the asymptotically conic non-trapping
manifolds. More precisely, we prove the well-posedness and
scattering of the following semi-linear wave equation,
\begin{equation}\label{neq}
\begin{cases}
\partial_t^2 u+\mathrm{H} u=\gamma|u|^{p-1}u,\qquad
(t,z)\in\R\times M^\circ, \gamma\in\{1,-1\}, \\
u(t,z)|_{t=0}=u_0(z),\quad
\partial_tu(t,z)|_{t=0}=u_1(z).
\end{cases}
\end{equation}
In the case of flat Euclidean space, there are many results on the
understanding of the global existence and scattering. We refer the
readers to \cite{LS,Sogge} and references therein.
Blair-Ford-Marzuola \cite{BFM} also considered similar results for
the wave equation on flat cones $C(\mathbb{S}_{\rho}^1)$. Due to
better understanding the spectral measure, we can extend the result to high dimension.
We here are mostly interested in the range of exponents
$p\in[p_{\text{conf}},1+\frac4{n-2}]$ and the initial data is in $
\dot H^{s_c}(M^\circ)\times \dot H^{s_c-1}(M^\circ)$, where
$p_{\text{conf}}=1+\frac4{n-1}$ and $s_c=\frac
n2-\frac2{p-1}$.\vspace{0.2cm}

Our main result concerning well-posedness and scattering is the
following.

\begin{theorem}\label{thm1}  Let $(M^\circ,g)$ be a non-trapping asymptotically conic manifold of dimension
$n\geq3$. Suppose $p\in[p_{\mathrm{conf}},1+\frac4{n-2}]$ and
$(u_0,u_1)\in \dot H^{s_c}(M^\circ)\times \dot H^{s_c-1}(M^\circ)$,
then there exist $T>0$ and  a unique solution $u$ to \eqref{neq}
satisfying
\begin{equation}\label{lwp} u\in C_t([0,T]; \dot H^{s_c}(M^\circ))\cap
L^{q_0}([0,T];L^{q_0}(M^\circ)),\end{equation} where
$q_0=(p-1)(n+1)/2$. In addition, if there is a small constant
$\epsilon(p)$ such that
\begin{equation}\label{small}
\|u_0\|_{ \dot H^{s_c}}+\|u_1\|_{\dot H^{s_c-1}}<\epsilon(p),
\end{equation}
then there is a unique global and scattering solution $u$ to
\eqref{neq} satisfying
\begin{equation}\label{gwp} u\in C_t(\R; H^{s_c}(M^\circ))\cap
L^{q_0}(\R;L^{q_0}(M^\circ)).\end{equation}
\end{theorem}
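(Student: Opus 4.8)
\medskip

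We outline the argument. The plan is to solve \eqref{neq} by a Picard iteration on its Duhamel formulation, using Theorem~\ref{Strichartz} as the sole nontrivial ingredient — so that all of the geometry of $(M^\circ,g)$ is black-boxed — and to follow thereafter the scheme of Lindblad--Sogge \cite{LS}. Denote by $W_0(t)(u_0,u_1)=\cos(t\sqrt{\mathrm{H}})u_0+\frac{\sin(t\sqrt{\mathrm{H}})}{\sqrt{\mathrm{H}}}u_1$ the free wave evolution and by $\mathcal{D}G(t)=\int_0^t\frac{\sin((t-s)\sqrt{\mathrm{H}})}{\sqrt{\mathrm{H}}}G(s)\,ds$ the Duhamel operator, so that a solution of \eqref{neq} is a fixed point of $\Phi(u)(t):=W_0(t)(u_0,u_1)+\gamma\,\mathcal{D}(|u|^{p-1}u)(t)$. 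We seek this fixed point in a Strichartz space over $I\times M^\circ$, with $I=[0,T]$ for local existence and $I=\mathbb{R}$ for the small-data global theory.

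The heart of the matter is the choice of Strichartz exponents. I would iterate in $X_I:=L^{q_1}_t(I;L^{r_1}_x(M^\circ))$, where $(q_1,r_1)$ is a wave-admissible pair (satisfying \eqref{adm}) obeying \eqref{scaling} with $s=s_c$, i.e.\ $\frac1{q_1}+\frac n{r_1}=\frac n2-s_c=\frac2{p-1}$, chosen so that in addition its ``nonlinear dual'' pair $(\tilde q_1,\tilde r_1)$, defined by $\tilde q_1'=q_1/p$ and $\tilde r_1'=r_1/p$, is \emph{also} wave-admissible and satisfies \eqref{scaling} with the same $s=s_c$. The gap relation for $(\tilde q_1,\tilde r_1)$ is then automatic, since $\frac1{\tilde q_1'}+\frac n{\tilde r_1'}=p\big(\frac1{q_1}+\frac n{r_1}\big)=\frac{2p}{p-1}=\frac n2-s_c+2$; and condition \eqref{adm} for $(\tilde q_1,\tilde r_1)$ amounts to $p\big(\frac2{q_1}+\frac{n-1}{r_1}\big)\ge\frac{n+3}2$, which — combined with $\frac2{q_1}+\frac{n-1}{r_1}\le\frac{n-1}2$ (that is, \eqref{adm} for $(q_1,r_1)$) and the range restriction $q_1,r_1\in[p,2p]$ forced by $\tilde q_1,\tilde r_1\in[2,\infty]$ — has a solution $(q_1,r_1)$ precisely when $p_{\mathrm{conf}}\le p\le1+\frac4{n-2}$: the lower bound issues from the admissibility dichotomy $\frac{n+3}{2p}\le\frac2{q_1}+\frac{n-1}{r_1}\le\frac{n-1}2$, the upper bound from the constraint $r_1\le2p$. (At $p=p_{\mathrm{conf}}$ one is forced to the diagonal $q_1=r_1=q_0$; for larger $p$ a short computation exhibits a nondegenerate sub-interval of admissible pairs, within which one can also keep clear of the forbidden triple $(2,\infty,3)$ when $n=3$.) With such a pair in hand, the nonlinearity is controlled by a plain space--time Hölder inequality:
\begin{equation*}
\big\||u|^{p-1}u\big\|_{L^{\tilde q_1'}_t(I;L^{\tilde r_1'}_x)}=\|u\|_{X_I}^{p},\qquad
\big\||u|^{p-1}u-|v|^{p-1}v\big\|_{L^{\tilde q_1'}_t(I;L^{\tilde r_1'}_x)}\lesssim\big(\|u\|_{X_I}^{p-1}+\|v\|_{X_I}^{p-1}\big)\|u-v\|_{X_I},
\end{equation*}
where the second estimate uses the elementary bound $\big||a|^{p-1}a-|b|^{p-1}b\big|\lesssim(|a|^{p-1}+|b|^{p-1})|a-b|$, valid for $p\ge1$.

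Local well-posedness follows in the standard way. Applying Theorem~\ref{Strichartz} with $(q_1,r_1)$ on the left and $(\tilde q_1,\tilde r_1)$ on the right gives $\|\Phi(u)\|_{X_I}\le\|u_{\mathrm{lin}}\|_{X_I}+C\|u\|_{X_I}^{p}$, with $u_{\mathrm{lin}}(t):=W_0(t)(u_0,u_1)$, together with the companion Lipschitz bound from the difference estimate above. Since $\|u_{\mathrm{lin}}\|_{X_{\mathbb{R}}}\lesssim\|u_0\|_{\dot H^{s_c}}+\|u_1\|_{\dot H^{s_c-1}}<\infty$, dominated convergence gives $\|u_{\mathrm{lin}}\|_{X_{[0,T]}}\to0$ as $T\to0^+$, so for $T$ small enough and $R$ comparable to $\|u_{\mathrm{lin}}\|_{X_{[0,T]}}$ the map $\Phi$ is a contraction on the closed ball of radius $R$ in $X_{[0,T]}$; its fixed point is the solution $u$. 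That $u\in L^{q_0}_t([0,T];L^{q_0}_x)$ with $q_0=(p-1)(n+1)/2$ is obtained by re-applying Theorem~\ref{Strichartz} with the pair $(q_0,q_0)$ on the left — wave-admissible, with gap parameter $s_c$, exactly because $p\ge p_{\mathrm{conf}}$ — and $(\tilde q_1,\tilde r_1)$ on the right, and $u\in C_t([0,T];\dot H^{s_c})$ is part of the left-hand side of \eqref{stri}. Uniqueness in the whole class \eqref{lwp} follows by the usual splitting of $I$ into finitely many short subintervals on which the contraction estimate applies.

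Under the smallness hypothesis \eqref{small}, one runs the same fixed-point argument directly on $I=\mathbb{R}$: for $\epsilon(p)$ small enough, $\Phi$ is a contraction on a ball of radius comparable to $\epsilon(p)$ in $X_{\mathbb{R}}$, yielding the global solution of \eqref{gwp}, which lies in $L^{q_0}_t(\mathbb{R};L^{q_0}_x)\cap C_t(\mathbb{R};\dot H^{s_c})$ by the upgrade above. Scattering is then the standard tail argument: one defines the asymptotic data $(u_0^{\pm},u_1^{\pm})\in\dot H^{s_c}\times\dot H^{s_c-1}$ by adding to $(u_0,u_1)$ the Duhamel integrals of $\gamma|u|^{p-1}u$ over $(0,\pm\infty)$, which converge in $\dot H^{s_c}\times\dot H^{s_c-1}$ because Theorem~\ref{Strichartz} bounds the relevant tails by $\||u|^{p-1}u\|_{L^{\tilde q_1'}_t([T,\infty);L^{\tilde r_1'}_x)}=\|u\|_{L^{q_1}_t([T,\infty);L^{r_1}_x)}^{p}\to0$ as $T\to\infty$ (the tail of a finite integral, as $u\in X_{\mathbb{R}}$); the same bound shows $\|(u,\partial_tu)(t)-\big(W_0(t)(u_0^{\pm},u_1^{\pm}),\partial_tW_0(t)(u_0^{\pm},u_1^{\pm})\big)\|_{\dot H^{s_c}\times\dot H^{s_c-1}}\to0$ as $t\to\pm\infty$. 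The only genuinely nontrivial step above is the exponent bookkeeping of the second paragraph — producing $(q_1,r_1)$ and $(\tilde q_1,\tilde r_1)$ for the entire range $[p_{\mathrm{conf}},1+\frac4{n-2}]$; everything else is the Keel--Tao/Lindblad--Sogge machinery applied essentially verbatim, and in particular no Sobolev embedding or fractional product rule on $M^\circ$ is required, since the nonlinear estimate is just a Hölder inequality in the space--time Lebesgue norms.
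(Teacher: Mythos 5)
Your plan — close the fixed point purely by Hölder, using an admissible pair $(q_1,r_1)$ at regularity $s_c$ whose $p$-fold Hölder dual $(\tilde q_1,\tilde r_1)=\bigl((q_1/p)',(r_1/p)'\bigr)$ is again admissible — is genuinely different from the paper's, which instead works at the $s=\tfrac12$ (conformal) admissible pairs $(q_1,q_1)$, $q_1=\tfrac{2(n+1)}{n-1}$, and $(\tilde q,\tilde r)$ with $\tilde q'=\tilde r'=\tfrac{2(n+1)}{n+3}$, moves $\alpha=s_c-\tfrac12$ fractional derivatives onto the nonlinearity, and then invokes a Sobolev embedding together with the fractional Leibniz rule on asymptotically conic manifolds from Coulhon--Russ--Tardivel-Nachef \cite{CRT}. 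Your route has the virtue of using no structure of $M^\circ$ beyond Theorem~\ref{Strichartz} itself; it would be a real simplification were the exponent bookkeeping to close for the whole range $p\in[p_{\mathrm{conf}},1+\frac4{n-2}]$ and all $n\ge3$.

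Unfortunately it does not close when $n\ge7$ and $p$ is near the energy-critical endpoint, so there is a genuine gap. Take $n=7$, $p=1+\frac4{n-2}=\frac95$, so $s_c=1$. The gap condition forces $\frac1{q_1}+\frac7{r_1}=\frac52$, hence $r_1\in[\tfrac{14}{5},\tfrac72]$ in order that $\frac1{q_1}\in[0,\tfrac12]$. The requirement $\tilde q_1\ge2$, i.e.\ $q_1\le2p=\tfrac{18}{5}$, forces $r_1\ge\tfrac{63}{20}=3.15$; but computing $\frac2{\tilde q_1}+\frac6{\tilde r_1}=-1+\frac{72}{5r_1}$, wave admissibility of $(\tilde q_1,\tilde r_1)$ demands $r_1\ge\tfrac{18}{5}=3.6>\tfrac72$, which is incompatible with $r_1\le\tfrac72$. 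So no valid pair exists. (The same failure occurs for all $n\ge7$ near $p=1+\frac4{n-2}$.) Your remark that the upper bound of the $p$-range ``issues from the constraint $r_1\le2p$'' misidentifies the obstruction: what conflicts at the top end is $q_1\ge2$ together with admissibility of the dual pair, not $r_1\le2p$. For $n\le6$ the pure-Hölder scheme does in fact cover the full range (e.g.\ $n=3$: $(q_1,r_1)=(5,10)$, $(\tilde q_1,\tilde r_1)=(\infty,2)$ at $p=5$), and one could honestly present it there; but as written the proof does not establish the theorem for general $n$. In the missing regime the extra flexibility afforded by the fractional product rule on $M^\circ$, as in the paper, appears to be essential, so this ingredient cannot be black-boxed away.
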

\vspace{0.2cm}

This paper is organized as follows. In Section 2 we review the
results of the microlocalized spectral measure and prove the square
function inequalities on this setting. Section 3 is devoted to the
proofs of the microlocalized dispersive estimates and
$L^2$-estimates. In Section 4, we prove the homogeneous and
inhomogeneous Strichartz estimates.  Finally, we apply the
Strichartz estimates to show Theorem \ref{thm1}.\vspace{0.1cm}

{\bf Acknowledgments:}\quad  The author would like to thank
Jean-Marc Bouclet, Andrew Hassell and Changxing Miao for their
helpful discussions and encouragement. He also would like to thank the anonymous referee for
careful reading the manuscript and for giving useful comments. This research was supported
by PFMEC(20121101120044),  Beijing Natural Science Foundation(1144014),
National Natural Science Foundation of China (11401024) and Discovery Grant
DP120102019 from the Australian Research Council.\vspace{0.2cm}

\section{The microlocalized spectral measure and Littlewood-Paley squarefunction estimate}
In this section, we briefly recall the key elements of the
microlocalized spectral measure, which was constructed by Hassell
and the author \cite{HZ} to capture both its size and the
oscillatory behavior. We also prove the Littlewood-Paley
squarefunction estimates on this setting that we require in
subsequence section. \vspace{0.2cm}

\subsection{The microlocalized spectral measure}In the free Euclidean space, the half wave propagator has
an explicit formula by using the Fourier transform, but in the
asymptotically conical manifold it turns out to be quite
complicated. From the results of \cite{GHS1,HW}, we have known that
the Schwartz kernel of the spectral measure can be described as a
Legendrian distribution on the compactification of the space
$M\times M$ uniformly with respect to the spectral parameter
$\lambda$. As pointed out in introduction, we really need to choose
an operator partition of unity to microlocalize the spectral measure
such that the spectral measure can be expressed in a formula
capturing not only the size also the oscillatory behavior. This was
constructed and proved in \cite{HZ}. For convenience, we recall and
slightly modify the statement to adapt our following application.

\begin{proposition}
\label{prop:localized spectral measure} Let $(M^\circ,g)$ and
$\mathrm{H}$ be in Theorem \ref{Strichartz}. For fixed
$\lambda_0>0$, then there exists an operator partition of unity on
$L^2(M)$
\begin{equation}\label{partition}
\begin{split}
\mathrm{Id}=\sum_{i=0}^{N_l}Q^{\mathrm{low}}_i(\lambda)\quad
\text{for}~0<\lambda\leq 2\lambda_0;\\
\mathrm{Id}=\big(\sum_{i=1}^{N'}+\sum_{i=N'+1}^{N_h}\big)Q^{\mathrm{high}}_i(\lambda)\quad
\text{for}~\lambda\geq\lambda_0/2,
\end{split}
\end{equation}
where the $Q^{\mathrm{low}}_i$ and $Q^{\mathrm{high}}_i$ are
uniformly bounded as operators on $L^2$ and $N_l$ and $N_h$ are
bounded independent of $\lambda$, such that

$\bullet$ when $Q(\lambda)$ is equal to either
$Q^{\mathrm{low}}_0(\lambda)$ or $Q^{\mathrm{low}}_1(\lambda)$; or
$Q(\lambda)$ is equal to $Q^{\mathrm{high}}_1(\lambda)$, we have
\begin{equation}\label{bean0}
\begin{split}
\Big|\big(\frac{d}{d\lambda}\big)^{\alpha}\big(Q(\lambda)dE_{\sqrt{\mathrm{H}}}(\lambda){Q}^*(\lambda)\big)\Big|\leq
C_\alpha\lambda^{n-1-\alpha}\quad \forall \alpha\in\mathbb{N}.
\end{split}
\end{equation}

$\bullet$ when $Q(\lambda)$ is equal to
$Q^{\mathrm{low}}_i(\lambda)$ or $Q^{\mathrm{high}}_i(\lambda)$ for
$i\geq2$, we have
\begin{equation}\label{bean}
(Q(\lambda)dE_{\sqrt{\mathrm{H}}}(\lambda)Q^*(\lambda))(z,z')=\lambda^{n-1}e^{\pm
i\lambda d(z,z')}a(\lambda,z,z').
\end{equation}
Here $d(\cdot, \cdot)$ is the Riemannian distance on $M^\circ$, and
$a$ satisfies
\begin{equation}\label{beans}
|\partial_\lambda^\alpha a(\lambda,z,z')|\leq C_\alpha
\lambda^{-\alpha}(1+\lambda d(z,z'))^{-\frac{n-1}2}.
\end{equation}
\end{proposition}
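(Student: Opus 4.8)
The plan is to reduce the statement to the microlocalized spectral measure estimates already established in \cite{HZ}, since the proposition is explicitly described as a slight modification of a result proved there. The construction of the operator partition of unity $\{Q_i^{\lo}(\lambda)\}$ and $\{Q_i^{\hi}(\lambda)\}$ proceeds, as in \cite{GHS2,HZ}, by separately treating low frequency $0<\lambda\leq 2\lambda_0$ and high frequency $\lambda\geq\lambda_0/2$. In the high-frequency regime, I would work on the blown-up, semiclassically-rescaled double space (the space $\MMksc$ or its scattering analogue on which the spectral measure is a Legendrian/intersecting-Legendrian distribution, following \cite{GHS1,HW}), and choose the $Q_i^{\hi}(\lambda)$ as a finite cover of the relevant phase space by pseudodifferential cutoffs, subordinate to a partition such that in each patch either (i) the incoming and outgoing Legendrians are separated from the conormal bundle of the diagonal, so the measure is a genuine oscillatory integral with a single phase $\pm\lambda d(z,z')$, or (ii) the patch sits near the diagonal, where the measure, after the Legendrian parametrization, is a symbol of order $n-1$ in $\lambda$ and all $\lambda$-derivatives gain a power of $\lambda^{-1}$. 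The first family ($i\geq 2$) yields the representation \eqref{bean} with $a$ satisfying \eqref{beans}, the second family ($i=1$, together with the two low-frequency pieces $Q_0^{\lo},Q_1^{\lo}$ covering a neighborhood of the diagonal and of the zero section) yields the pure symbolic bound \eqref{bean0}. The bound on the number of patches $N_l,N_h$ being independent of $\lambda$ is a standard compactness statement once the geometry is set up on the compact manifold $M$.

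The main work — and here one simply invokes \cite{HZ} — is that in patches of type (i) the Schwartz kernel of $Q(\lambda)\,dE_{\sqrt{\mathrm{H}}}(\lambda)\,Q^*(\lambda)$ really does have the clean single-phase form $\lambda^{n-1}e^{\pm i\lambda d(z,z')}a(\lambda,z,z')$ with the stated decay $(1+\lambda d(z,z'))^{-(n-1)/2}$ in the amplitude. This decay is exactly the stationary-phase gain coming from the $(n-1)$-dimensional sphere of directions along which the Legendrian projects to a fixed pair $(z,z')$, and it is the heart of the microlocalized spectral measure construction of \cite{HZ}; the estimates \eqref{beans} on $\lambda$-derivatives follow because differentiating in $\lambda$ either lands on the amplitude (losing nothing beyond $\lambda^{-1}$) or brings down a factor of $d(z,z')$, which is controlled against the decay. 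The only genuinely new point relative to the statement in \cite{HZ} is cosmetic: I regroup the high-frequency pieces as $\big(\sum_{i=1}^{N'}+\sum_{i=N'+1}^{N_h}\big)Q_i^{\hi}(\lambda)$, separating the near-diagonal/near-incoming block ($1\le i\le N'$, for which we only claim the symbolic bound \eqref{bean0} and which will later be handled via the squarefunction estimate) from the block admitting the oscillatory representation \eqref{bean}; no new analytic input is required for this bookkeeping.

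The step I expect to be the actual obstacle — were one proving this from scratch rather than citing \cite{HZ} — is establishing the amplitude decay \eqref{beans} \emph{uniformly down to $\lambda d(z,z')\sim 1$ and uniformly up to the boundary $\partial M=Y$}, i.e.\ showing that the transition between the "elliptic" region $\lambda d(z,z')\lesssim 1$ (where $a$ is just a symbol of order $0$) and the "oscillatory" region $\lambda d(z,z')\gtrsim 1$ (where the $(n-1)/2$ stationary-phase gain kicks in) is seamless, and that nothing degenerates as $z$ or $z'$ approach infinity. This requires the full Legendrian-distribution description of the spectral measure on the blown-up double space from \cite{GHS1,HW1,HW} together with the nontrapping hypothesis (which guarantees that geodesics, hence the phase $d(z,z')$, behave well globally and that the relevant Legendrians are embedded away from the conormal of the diagonal). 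Given Proposition \ref{prop:localized spectral measure} as stated in \cite{HZ}, however, the proof here is complete: take the partition from \cite{HZ}, relabel the high-frequency indices as above, and the bounds \eqref{bean0}, \eqref{bean}, \eqref{beans} are precisely those proved there.
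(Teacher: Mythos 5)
Your proposal matches the paper's treatment exactly: Proposition \ref{prop:localized spectral measure} is not proved in this paper but simply recalled (with the cosmetic regrouping of the high-frequency indices that you correctly identify) from \cite{HZ}, where the operator partition of unity and the size/oscillation estimates \eqref{bean0}--\eqref{beans} are established. Your background sketch of the Legendrian/phase-space construction is consistent with the framework of \cite{GHS1,GHS2,HW,HZ}, but the paper itself offers nothing beyond the citation, so no comparison of proof details is possible.
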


Having this result, we can exploit the oscillations both in the
multiplier $e^{i(t-s)\lambda}$ and in $e^{\pm i\lambda d(z,z')}$ to
obtain the required dispersive estimate for the $TT^*$ version of
the microlocalized propagator.

\subsection{The Littlewood-Paley squarefunction estimate} In this
subsection, we prove the Littlewood-Paley squarefunction estimate
for the asymptotically conic manifold, which allows us to reduce
Theorem \ref{Strichartz} to a frequency-localized estimate (see
Proposition \ref{lStrichartz}).\vspace{0.1cm}

Let $\varphi\in C_0^\infty(\mathbb{R}\setminus\{0\})$ take values in
$[0,1]$ and be supported in $[1/2,2]$ such that
\begin{equation}\label{dp}
1=\sum_{j\in\Z}\varphi(2^{-j}\lambda),\quad\lambda>0.
\end{equation}
Define $\varphi_0(\lambda)=\sum_{j\leq0}\varphi(2^{-j}\lambda)$.
Then the result about the Littlewood-Paley squarefunction estimate
reads as follows:
\begin{proposition}\label{prop:square} Let $(M^\circ,g)$ be an asymptotically conic
manifold, trapping or not, and $\mathrm{H}=-\Delta_g$ is the
Laplace-Beltrami operator on $(M^\circ,g)$. Then for $1<p<\infty$,
there exist constants $c_p$ and $C_p$ depending on $p$ such that
\begin{equation}\label{square}
c_p\|f\|_{L^p(M^\circ)}\leq
\big\|\big(\sum_{j\in\Z}|\varphi(2^{-j}\sqrt{\mathrm{H}})f|^2\big)^{\frac12}\big\|_{L^p(M^\circ)}\leq
C_p\|f\|_{L^p(M^\circ)}.
\end{equation}
\end{proposition}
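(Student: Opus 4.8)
The plan is to derive the Littlewood–Paley square function estimate \eqref{square} from the general machinery of spectral multiplier theorems, which apply once one knows that the heat semigroup $e^{-t\mathrm{H}}$ satisfies a Gaussian upper bound. Concretely, I would first establish that, for all $t>0$ and $z,z'\in M^\circ$,
\begin{equation}\label{plan:gauss}
|e^{-t\mathrm{H}}(z,z')|\leq C\, V(z,\sqrt t)^{-1}\exp\!\big(-c\,d(z,z')^2/t\big),
\end{equation}
where $V(z,r)$ denotes the volume of the geodesic ball $B(z,r)$; together with the fact that $(M^\circ,g)$ is a doubling space (this is standard for asymptotically conic manifolds: near infinity the volume growth is $\sim r^{n}$, and on any compact region it is bounded above and below). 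The short-time part of \eqref{plan:gauss} follows from the Li–Yau / Cheng–Li–Yau bounds, since $(M^\circ,g)$ has Ricci curvature bounded below and injectivity radius bounded below on compact sets, and near infinity the metric is a perturbation of the exact cone. The long-time part is the delicate point and should be obtained from the low-energy resolvent / spectral measure estimates of Guillarmou–Hassell–Sikora \cite{GHS2}: the restriction-type bound on $dE_{\sqrt{\mathrm{H}}}(\lambda)$ for small $\lambda$ controls $e^{-t\mathrm{H}}$ for large $t$ via $e^{-t\mathrm{H}}=\int_0^\infty e^{-t\lambda^2}\,dE_{\sqrt{\mathrm{H}}}(\lambda)$, yielding the on-diagonal decay $\|e^{-t\mathrm{H}}\|_{L^1\to L^\infty}\lesssim t^{-n/2}$, from which the full Gaussian off-diagonal bound is recovered by the Davies–Gaffney finite-propagation-speed argument combined with Grigor'yan's trick.

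With \eqref{plan:gauss} and doubling in hand, the second step is to invoke a spectral multiplier theorem on spaces of homogeneous type — here I would cite Alexopoulos \cite{Alex}, which gives that for a function $m$ satisfying a Mihlin–Hörmander condition $|\lambda^k m^{(k)}(\lambda)|\leq C$ for $k\leq K$ (with $K$ depending on the dimension), the operator $m(\sqrt{\mathrm{H}})$ is bounded on $L^p(M^\circ)$ for all $1<p<\infty$, and is of weak type $(1,1)$. In particular each Littlewood–Paley projection $\varphi(2^{-j}\sqrt{\mathrm{H}})$ is bounded on $L^p$ uniformly in $j$, and more importantly the vector-valued extension holds.

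The third step is Stein's classical Rademacher-function argument: writing $\{\varepsilon_j\}$ for the Rademacher functions on $[0,1]$ and applying the scalar multiplier theorem to the random multiplier $m_\omega(\lambda)=\sum_j \varepsilon_j(\omega)\varphi(2^{-j}\lambda)$ — which satisfies the Hörmander condition uniformly in $\omega$ since only boundedly many terms are nonzero at each $\lambda$ and each enjoys the required derivative bounds — one gets $\|\sum_j \varepsilon_j(\omega)\varphi(2^{-j}\sqrt{\mathrm{H}})f\|_{L^p}\leq C_p\|f\|_{L^p}$ uniformly in $\omega$; integrating in $\omega$ and using Khintchine's inequality converts the left side into $\|(\sum_j|\varphi(2^{-j}\sqrt{\mathrm{H}})f|^2)^{1/2}\|_{L^p}$, giving the upper bound in \eqref{square}. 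The lower bound follows by the standard duality/polarization argument: for $f\in L^p$ and $g\in L^{p'}$ one writes $\langle f,g\rangle$ (up to a harmless correction on the zero-frequency part, using $\sum_j\varphi(2^{-j}\lambda)^2\sim 1$ and a second comparison function) as $\sum_j\langle \varphi(2^{-j}\sqrt{\mathrm{H}})f,\,\tilde\varphi(2^{-j}\sqrt{\mathrm{H}})g\rangle$, applies Cauchy–Schwarz in $j$ and then Hölder in $z$, and bounds the square function of $g$ in $L^{p'}$ by the already-proved upper estimate.

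The main obstacle I anticipate is establishing the \emph{global-in-time} Gaussian heat kernel bound \eqref{plan:gauss}, since the Li–Yau method alone gives only the local-in-time statement; the large-time behaviour genuinely requires the low-energy resolvent analysis on asymptotically conic manifolds, and one must be a little careful that the $L^1\to L^\infty$ bound $t^{-n/2}$ together with the Davies–Gaffney estimate (which is automatic from finite propagation speed of the wave equation, valid on any complete manifold) really does upgrade to the pointwise Gaussian bound with the $V(z,\sqrt t)^{-1}$ prefactor — this is where doubling is used. Everything after that — the multiplier theorem and the Rademacher argument — is routine once the references are set up, and notably the result holds for trapping manifolds too, since the heat kernel bound does not see the geodesic dynamics.
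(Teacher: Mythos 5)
Your proposal follows essentially the same route as the paper: short-time Gaussian bounds from Cheng–Li–Yau, long-time on-diagonal decay $\|e^{-t\mathrm{H}}\|_{L^1\to L^\infty}\lesssim t^{-n/2}$ from the Guillarmou–Hassell–Sikora low-energy spectral measure bound (the paper also explicitly controls the high-frequency contribution via a spectral projection estimate $\|E_{\sqrt{\mathrm{H}}}([0,\mu])\|_{L^1\to L^2}\lesssim\mu^{n/2}$ derived from the local heat bound, but this is the routine part you gesture at), upgrade to off-diagonal Gaussian bounds via Grigor'yan, and then Alexopoulos' multiplier theorem plus Stein's Rademacher argument and duality. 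The only cosmetic difference is that you invoke Davies–Gaffney in addition to Grigor'yan for the off-diagonal upgrade, where the paper cites Grigor'yan alone; this is an equivalent route.
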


\begin{remark} To our knowledge, such squarefunction estimates are new in the
case of asymptotically conic manifolds, though the proof is
considerably simpler due to the heat kernel bounds in Cheng-Li-Yau
\cite{CLY}, Guillarmou-Hassell-Sikora's \cite{GHS2} restriction
estimate for low frequency and the spectral multiplier estimates in
Alexopoulos \cite{Alex}. In the general noncompact manifolds with
ends, Bouclet \cite{Bouclet} proved a weak version square function
inequality which was given by for $1<p<\infty$
\begin{equation}\label{hsquare}
\|f\|_{L^p}\lesssim
\big\|\big(\sum_{j\geq0}|\varphi(2^{-2j}\mathrm{H})f|^2\big)^{\frac12}\big\|_{L^p}+\|f\|_{L^2}.
\end{equation}
Bouclet also pointed out that the usual square function inequalities
may fail on asymptotically hyperbolic manifolds and improved
\eqref{hsquare} for asymptotically conic manifolds by showing
\begin{equation}\label{hsquare1}
\|\varphi_0(\mathrm{H})f\|_{L^p}+
\big\|\big(\sum_{j\geq0}|\varphi(2^{-2j}\mathrm{H})f|^2\big)^{\frac12}\big\|_{L^p}\sim
\|f\|_{L^p}.
\end{equation}
One can see that the squarefunction estimate in \eqref{square}
involves the low frequency in contrast to \eqref{hsquare1}.
\end{remark}

\begin{proof}
This proof follows from the Stein's \cite{Stein} classical argument
(in $\R^n$) involving Rademacher functions and an appropriate
Mikhlin-H\"ormander multiplier theorem. Now we provide details as
follows. We notice that the asymptotically conic manifolds are a
relatively well-behaved class of manifolds. In particular, all
section curvatures of $(M^\circ,g)$ approach zero as $x$ goes to
zero, and thus $(M^\circ,g)$ has bounded sectional curvature
and has low bounds for the injectivity radius. Now we need a theorem
in Cheng-Li-Yau \cite{CLY} and recall it for convenience. For
complete Riemannian manifolds $M^\circ$ of bounded sectional
curvature and injectivity radius bounded below, Cheng-Li-Yau's
theorem gives the following local-in-time Gaussian upper bound for
the heat kernel
\begin{lemma} There exist nonzero constants $c$ and $C$ such that
the heat kernel on $M^\circ$, denoted $H(t,z,z')$, satisfies the
Gaussian upper bound of the form for $t\in[0,T]$
\begin{equation}\label{heat}
H(t,z,z')\leq Ct^{-\frac n2}\exp\Big(-\frac{d(z,z')^2}{ct}\Big),
\end{equation}
where $d(z,z')$ is the distance between $z$ and $z'$ on $M^\circ$.
\end{lemma}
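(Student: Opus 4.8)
The stated bound is the local-in-time heat kernel estimate of Cheng--Li--Yau \cite{CLY}, so the plan is to verify that the hypotheses of their theorem are met in the present geometry and to recall the mechanism producing the Gaussian bound.

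First I would record the geometric input. As noted above, all sectional curvatures of $(M^\circ,g)$ tend to $0$ as $x\to0$, hence $|\mathrm{Sec}_g|\le K$ globally and in particular $\mathrm{Ric}_g\ge-(n-1)K$; moreover the injectivity radius of $(M^\circ,g)$ is bounded below by some $i_0>0$. Thus $(M^\circ,g)$ has \emph{bounded geometry}: there is $r_0\in(0,i_0)$ such that for every $z$ the metric, expressed in geodesic normal coordinates on $B(z,r_0)$, is uniformly (in $z$) two-sided comparable to the Euclidean one, together with all its derivatives. From this I would extract two facts: (a) the uniform non-collapsing bound $V(z,r)\ge c_n r^n$ for $0<r\le r_0$ (Rauch comparison together with the injectivity radius lower bound), and (b) the local volume doubling property $V(z,2r)\le C_D\,V(z,r)$, which for $r\le r_0$ is immediate and for larger $r$ follows from Bishop--Gromov applied to the Ricci lower bound.

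Next I would invoke the Li--Yau / Cheng--Li--Yau upper bound: under $\mathrm{Ric}_g\ge-(n-1)K$ one has, for all $t>0$,
\[
H(t,z,z')\le \frac{C}{V\!\big(z,\sqrt{t}\,\big)}\,e^{CKt}\,\exp\!\Big(-\frac{d(z,z')^2}{ct}\Big),
\]
which comes from the Li--Yau gradient estimate propagating the on-diagonal bound $H(t,z,z)\le C/V(z,\sqrt t\,)$ off the diagonal, or equivalently from the Davies--Grigor'yan integrated maximum principle, the on-diagonal bound itself being a consequence of the local Faber--Krahn inequality implied by (a) and (b). Restricting to $t\in[0,T]$ with $T:=r_0^2$, the factor $e^{CKt}$ is bounded by a constant and (a) gives $V(z,\sqrt t\,)\ge c_n t^{n/2}$; substituting yields \eqref{heat}.

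The only point that needs care is the role of the two restrictions in the statement. The prefactor can be taken to be $t^{-n/2}$ rather than $V(z,\sqrt t\,)^{-1}$ precisely because the non-collapsing lower bound (a) is supplied by the \emph{local} geometry only for $\sqrt t\le r_0$, and no claim is made for $t\ge T$. For the purposes of this paper the complementary large-time behaviour of the heat kernel will instead be obtained from the low-frequency restriction estimate of Guillarmou--Hassell--Sikora \cite{GHS2}, so the small-time bound \eqref{heat} as stated is exactly what is required here.
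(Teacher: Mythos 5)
Your proposal is correct and follows the same route as the paper: the paper simply observes that an asymptotically conic manifold has bounded sectional curvature and injectivity radius bounded below, and then quotes the Cheng--Li--Yau theorem verbatim as the lemma, offering no further proof. You supply the additional details (non-collapsing of small balls, the Li--Yau upper bound with the $V(z,\sqrt t)^{-1}e^{CKt}$ prefactor, and the restriction to $t\le T=r_0^2$), all of which are accurate and consistent with the paper's intended use of the lemma.
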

We claim that the global-in-time Gaussian upper bound for the heat
kernel also holds, that is
\begin{equation}\label{gb}
H(t,z,z')\lesssim
\frac{1}{|B(z,\sqrt{t})|}\exp\Big(-\frac{d(z,z')^2}{ct}\Big)
\end{equation} holds for all $t>0$, where $|B(z,\sqrt{t})|$ is the volume of the ball of radius $\sqrt{t}$
at $z$. By \eqref{heat}, we only consider the case $t\geq1$. To
prove this, we write
\begin{equation*}
H(t,z,z')=e^{-t\mathrm{H}}(z,z')=\int_0^\infty e^{-t\lambda^2}
dE_{\sqrt{\mathrm{H}}}(\lambda).
\end{equation*}
Choose $\chi \in C_c^\infty(\R)$, such that $\chi(\lambda) = 1$ for
$\lambda \leq 1$, we decompose
\begin{equation*}
\begin{split}
&H(t,z,z')\\&=\int_0^\infty e^{-t\lambda^2}\chi(\lambda)
dE_{\sqrt{\mathrm{H}}}(\lambda)+\int_0^\infty
e^{-t\lambda^2}(1-\chi)(\lambda)
dE_{\sqrt{\mathrm{H}}}(\lambda)\\&=:I+II.
\end{split}
\end{equation*}
By using \cite[Theorem 1.3]{GHS2}, we see for $\lambda\leq 1$
\begin{equation*}
\begin{split}
|dE_{\sqrt{\mathrm{H}}}(\lambda)(z,z')|\leq C \lambda^{n-1}.
\end{split}
\end{equation*}
Hence $I\leq C t^{-\frac n2}$. To treat $II$, we need the following
lemma
\begin{lemma}\label{hp} If the local-in-time heat kernel bound $\|e^{-t\mathrm{H}}\|_{L^1\rightarrow L^2}\leq C t^{-\frac n4}$ holds for $t\leq1$, then the following
spectral projection estimate holds for $\mu\geq1$, $$\|E_{\sqrt{\mathrm{H}}}([0,\mu])\|_{L^1\rightarrow L^2}\leq C\mu^{n/2}.$$
\end{lemma}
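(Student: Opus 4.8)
The plan is to realize the spectral projection $E_{\sqrt{\mathrm{H}}}([0,\mu])$ as the composition of the heat semigroup at the dual time scale $t=\mu^{-2}$ with a multiplier operator that is bounded on $L^2$ uniformly in $\mu$, and then simply invoke the hypothesized $L^1\to L^2$ bound for the semigroup. The decisive structural point is that $\mu\geq 1$ forces $t=\mu^{-2}\in(0,1]$, so the local-in-time assumption $\|e^{-t\mathrm{H}}\|_{L^1\to L^2}\leq Ct^{-n/4}$ is exactly applicable at $t=\mu^{-2}$ and produces the right power $\mu^{n/2}$.

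Concretely, I would set $g_\mu(\lambda)=e^{\lambda^2/\mu^2}\,\mathbf{1}_{[0,\mu]}(\lambda)$ and observe, from the functional calculus for the nonnegative self-adjoint operator $\mathrm{H}$, the identity
\begin{equation*}
E_{\sqrt{\mathrm{H}}}([0,\mu])=g_\mu(\sqrt{\mathrm{H}})\,e^{-\mathrm{H}/\mu^2},
\end{equation*}
since $g_\mu(\lambda)e^{-\lambda^2/\mu^2}=\mathbf{1}_{[0,\mu]}(\lambda)$. By the spectral theorem, $\|g_\mu(\sqrt{\mathrm{H}})\|_{L^2\to L^2}=\sup_{\lambda\in[0,\mu]}e^{\lambda^2/\mu^2}=e$, uniformly in $\mu$. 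Hence, for $f\in L^1\cap L^2$,
\begin{equation*}
\|E_{\sqrt{\mathrm{H}}}([0,\mu])f\|_{L^2}\leq\|g_\mu(\sqrt{\mathrm{H}})\|_{L^2\to L^2}\,\|e^{-\mathrm{H}/\mu^2}f\|_{L^2}\leq e\,\|e^{-\mathrm{H}/\mu^2}\|_{L^1\to L^2}\,\|f\|_{L^1}.
\end{equation*}
Applying the hypothesis with $t=\mu^{-2}\leq 1$ gives $\|e^{-\mathrm{H}/\mu^2}\|_{L^1\to L^2}\leq C(\mu^{-2})^{-n/4}=C\mu^{n/2}$, so $\|E_{\sqrt{\mathrm{H}}}([0,\mu])f\|_{L^2}\leq eC\,\mu^{n/2}\|f\|_{L^1}$, and the estimate for all $f\in L^1$ follows by density of $L^1\cap L^2$ in $L^1$.

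I do not anticipate any genuine obstacle: this is the standard device of trading a sharp spectral cutoff $\mathbf{1}_{[0,\mu]}$ for a Gaussian cutoff at the reciprocal scale (as used, e.g., in \cite{GHS2}). The only two things that need care are that the correction multiplier $g_\mu$ must be bounded on $L^2$ with a bound \emph{independent of $\mu$}, which holds since $g_\mu\leq e$ on its support, and that the heat-kernel input is only assumed for $t\leq 1$, which is precisely matched by the restriction $\mu\geq 1$. (One could equally phrase the estimate without introducing $g_\mu$, by writing $\|E_{\sqrt{\mathrm{H}}}([0,\mu])f\|_{L^2}^2=\int_{[0,\mu]}d\langle E_{\sqrt{\mathrm{H}}}(\lambda)f,f\rangle$ and using the pointwise bound $\mathbf{1}_{[0,\mu]}(\lambda)\leq e^2e^{-2\lambda^2/\mu^2}$ to dominate this by $e^2\|e^{-\mathrm{H}/\mu^2}f\|_{L^2}^2$; this is the same argument.)
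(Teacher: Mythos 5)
Your proof is correct and is essentially the same argument as the paper's: both factor $E_{\sqrt{\mathrm{H}}}([0,\mu])=E_{\sqrt{\mathrm{H}}}([0,\mu])e^{\mathrm{H}/\mu^2}\cdot e^{-\mathrm{H}/\mu^2}$, bound the first factor on $L^2$ by $e$ via the pointwise inequality $\mathbf{1}_{[0,\mu]}(\lambda)\leq e\,e^{-\lambda^2/\mu^2}$, and apply the heat-kernel hypothesis at $t=\mu^{-2}\leq 1$ to the second. The only difference is notational (your $g_\mu(\sqrt{\mathrm{H}})$ is the paper's $E_{\sqrt{\mathrm{H}}}([0,\mu])e^{\mathrm{H}/\mu^2}$), plus some added care about density and uniformity in $\mu$ that the paper leaves implicit.
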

\begin{proof} Let
$t=\mu^{-2}$. Notice $1_{[0,\mu]}(s)\leq
e\exp(-\frac {s^2}{\mu^2})$, then spectral projection estimate is proved by writing $E_{\sqrt{\mathrm{H}}}([0,\mu])=E_{\sqrt{\mathrm{H}}}([0,\mu])
e^{\mathrm{H}/\mu^2}e^{-\mathrm{H}/\mu^2}$. Indeed, we have
\begin{equation*}
\begin{split}
\|E_{\sqrt{\mathrm{H}}}([0,\mu])\|_{L^1\rightarrow
L^2}\leq
\|E_{\sqrt{\mathrm{H}}}([0,\mu])e^{\mathrm{H}/\mu^2}\|_{L^2\rightarrow
L^2}\|e^{-\mathrm{H}/\mu^2}\|_{L^1\rightarrow L^2}\leq C\mu^{n/2}.
\end{split}
\end{equation*}

\end{proof}

Now we turn to estimate $II$. From the local-in-time heat kernel
estimate \eqref{heat}, one has $\|e^{-t\mathrm{H}}\|_{L^1\rightarrow L^\infty}\leq C t^{-\frac n2}$ for
$t\leq 1$. By using a $TT^*$ argument, 
$\|e^{-t\mathrm{H}}\|_{L^1\rightarrow L^2}\leq C t^{-\frac n4}$ for
$t\leq 1$. Hence by Lemma \ref{hp}
$\|E_{\sqrt{\mathrm{H}}}([0,\lambda])\|_{L^1\rightarrow L^2}\leq
C\lambda^{n/2}$ for $\lambda\geq1$, which implies
$\|E_{\sqrt{\mathrm{H}}}([0,\lambda])\|_{L^1\rightarrow
L^\infty}\leq C\lambda^{n}$. Therefore we have for $t\geq1$
\begin{equation*}
\begin{split}
\|II\|_{L^1\rightarrow L^\infty}&\leq \sum_{k\geq0}\int_0^\infty
\frac{d}{d\lambda}\left(e^{-t\lambda^2}\phi_k\left(\lambda\right)(1-\chi)(\lambda)\right)
\left\|E_{\sqrt{\mathrm{H}}}(\lambda)\right\|_{L^1\rightarrow
L^\infty}d\lambda\\&\leq Ce^{-t/2}\leq C t^{-\frac n2}.
\end{split}
\end{equation*}
Hence we have proved for all $t>0$
\begin{equation*}
H(t,z,z')\lesssim t^{-\frac n2}.
\end{equation*}
We use a theorem of Grigor'yan \cite[Theorem 1.1]{Grigor} that
establishes Gaussian upper bounds for arbitrary Riemannian
manifolds. His conclusion implies that if $H(t,z,z')$ satisfies
on-diagonal bounds
\begin{equation*}
H(t,z,z)\lesssim t^{-\frac n2},\quad H(t,z',z')\lesssim t^{-\frac
n2},
\end{equation*} then we have
\begin{equation*}
H(t,z,z')\lesssim t^{-\frac n2}\exp\Big(-\frac{d(z,z')^2}{ct}\Big).
\end{equation*}
Since $|B(z,\sqrt{t})|\sim t^{\frac n2}$, this gives
\begin{equation}\label{gb}
H(t,z,z')\lesssim
\frac{1}{|B(z,\sqrt{t})|}\exp\Big(-\frac{d(z,z')^2}{ct}\Big).
\end{equation}
Now we need a result of Alexopoulos \cite[Theorem 6.1]{Alex}, which
outlines how his results on Markov chains can be extended to treat
differential operators on manifolds where the associated heat kernel
satisfies Gaussian upper bounds. We remark here that the
asymptotically conic manifold satisfies the doubling condition in
contrast to the hyperbolic case. Given \eqref{gb}, Alexopoulos'
theorem implies that any spectral multiplier $m(\sqrt{\mathrm{H}})$
satisfying the usual H\"ormander condition maps $L^p(M)\rightarrow
L^p(M)$ for any $p\in (1,\infty)$. Furthermore, this boundedness
holds true for function $m\in C^N(\R)$ which satisfies the weaker
Mihlin-type condition for $N\geq\frac n2+1$
\begin{equation}\label{Minhlin}
\sup_{0\leq k\leq
N}\sup_{\lambda\in\R}\Big|\big(\lambda\partial_\lambda\big)^k
m(\lambda)\Big|\leq C<\infty.
\end{equation}
We now want to apply this result to a family of multipliers
$m^{\pm}(s,\sqrt{\mathrm{H}}), 0\leq s\leq 1$ defined using the
Rademacher functions. Let us introduce the Rademacher functions
defined as follows:

(i) the function $r_0(s)$ is defined by $r_0(s)=1$ on $[0,1/2]$ and
$r_0(s)=-1$ on $(1/2,1)$, and then extended to the real line by
periodicity, i.e. $r_0(s+1)=r_0(s)$;

(ii) for $k\in \N\setminus\{0\}$, $r_k(s)=r_0(2^ks)$.

Given any square integrable sequence of scalars $\{a_k\}_{k\geq0}$,
consider the function $m(s)=\sum_{k\geq0}a_k r_k(s)$. By a lemma in \cite[Appendix D]{Stein}, for any $p\in(1,\infty)$ there exist
constants $c_p$ and $C_p$ such that
\begin{equation}\label{rad}
c_p\|m(s)\|_{L^p([0,1])}\leq
\|m(s)\|_{L^2([0,1])}=\Big(\sum_{k\geq0}|a_k|^2\Big)^{\frac12}\leq
C_p\|m(s)\|_{L^p([0,1])}.
\end{equation}
Now define
$$m^{\pm}(s,\lambda)=\sum_{j\geq0}r_j(s)\varphi_{\pm j}(\lambda)$$
where $\varphi_{\pm j}(\lambda)=\varphi(2^{\mp j}\lambda)$. Then we
define the operator $m^\pm(s,\sqrt{\mathrm{H}})$ through the
spectral measure $dE_{\sqrt{\mathrm{H}}}(\lambda)$:
\begin{equation}
m^\pm(s,\sqrt{\mathrm{H}})=\int_0^\infty m^\pm(s,\lambda)
dE_{\sqrt{\mathrm{H}}}(\lambda).
\end{equation}
We note that this is well-defined by the spectral theory. It can be
verified that $m^\pm(s,\lambda)$ satisfies the condition
\eqref{Minhlin}, and we can take the constant $C$ independent of
$s$. Therefore we have that for $1<p<\infty$ and $f$ in $L^p$ by
\eqref{rad}
\begin{equation*}
\begin{split}
&\Big\|\Big(\sum_{j\geq0}\big|\varphi_{\pm
j}(\sqrt{\mathrm{H}})f\big|^2\Big)^{\frac12}\Big\|^p_{L^p}\lesssim
\Big\|\sum_{j\geq0}\varphi_{\pm
j}(\sqrt{\mathrm{H}})f(z)r_k(s)\Big\|^p_{L^p(M;L^p([0,1]))}\\&\lesssim
\int_{M^\circ}\int_0^1\Big|m^{\pm}(s,\sqrt{\mathrm{H}})f(z)\Big|^pdsdg(z)\lesssim\|f\|^p_{L^p}.
\end{split}
\end{equation*}
Therefore we prove
\begin{equation}\label{lsquare}
\begin{split}
&\Big\|\Big(\sum_{j\in\Z}\big|\varphi_{
j}(\sqrt{\mathrm{H}})f\big|^2\Big)^{\frac12}\Big\|_{L^p}\lesssim\|f\|_{L^p}.
\end{split}
\end{equation}
To see the other inequality, we first define
$\widetilde{\varphi}_j(\lambda)=\sum_{i=j-1}^{j+1}\varphi_i(\lambda)$,
then the above also is true when $\varphi_j(\lambda)$ is replaced by
$\widetilde{\varphi}_j(\lambda)$. Let $f_1\in L^p$ and $f_2\in
L^{p'}$, we see by H\"older's inequality and \eqref{lsquare}
\begin{equation*}
\begin{split}
\Big|\int_{M^\circ}f_1(z)\overline{f_2(z)}d
g(z)\Big|&=\Big|\int_{M^\circ}\sum_{j\in\Z}\big(\widetilde{\varphi}_j(\sqrt{\mathrm{H}})f_1\big)(z)
\overline{\big(\varphi_j(\sqrt{\mathrm{H}})f_2\big)(z)}d g(z)\Big|
\\&\lesssim
\Big\|\big(\sum_{j\in\Z}\big|\widetilde{\varphi}_j(\sqrt{\mathrm{H}})f_1\big|^2\big)^{\frac12}\Big\|_{L^p}
\Big\|\big(\sum_{j\in\Z}\big|\varphi_j(\sqrt{\mathrm{H}})f_2\big|^2\big)^{\frac12}\Big\|_{L^{p'}}
\\&\lesssim
\|f_1\|_{L^p}
\Big\|\big(\sum_{j\in\Z}\big|\varphi_j(\sqrt{\mathrm{H}})f_2\big|^2\big)^{\frac12}\Big\|_{L^{p'}}.
\end{split}
\end{equation*}
By duality, we hence prove \eqref{square}.

\end{proof}

\section{$L^2$-estimates and dispersive estimates}
In this section, we prove the $L^2$-estimates and dispersive
estimates needed for the abstract Keel-Tao argument. We begin by
defining microlocalized propagators and then show the definition makes sense. We do this by showing that each microlocalized
propagator is a bounded operator on $L^2$. This serves both to make
the  definition of each microlocalized propagator allowable, and to
establish the $L^2 \to L^2$ estimate needed for the abstract
Keel-Tao argument. We point out here that the microlocalized
propagators are different from the ones defined in \cite{HZ}, which
allow us to easily show the $L^2 \to L^2$ estimate by spectral
theory on Hilbert space but we need a square function inequality in
the establishment of the Strichartz estimate. Since the
microlocalized propagators avoid the conjugate points, we can prove
the $TT^*$ version dispersive estimates. \vspace{0.2cm}

\subsection{Microlocalized propagator and $L^2$-estimates}
We start by dividing the half wave propagator into a low-energy
piece and a high-energy piece. Choose $\chi \in C_c^\infty(\R)$,
such that $\chi(t) = 1$ for $t \leq 1$. We define
\begin{equation}
U^{\mathrm{low}}(t) = \int_0^\infty e^{it\lambda} \chi(\lambda)
dE_{\sqrt{\mathrm{H}}}(\lambda) , \quad U^{\mathrm{high}}(t) =
\int_0^\infty e^{it\lambda} (1 - \chi)(\lambda)
dE_{\sqrt{\mathrm{H}}}(\lambda).
\end{equation}
Using the partition of unity $1=\sum_{j\in\Z}\varphi(2^{-j}\lambda)$
we define
\begin{equation}
\begin{split}
U^{\mathrm{low}}_j(t) &= \int_0^\infty e^{it\lambda}
\varphi(2^{-j}\lambda)\chi(\lambda) dE_{\sqrt{\mathrm{H}}}(\lambda),\\
U^{\mathrm{high}}_j(t) &= \int_0^\infty e^{it\lambda}
\varphi(2^{-j}\lambda)(1 - \chi)(\lambda)
dE_{\sqrt{\mathrm{H}}}(\lambda).
\end{split}
\end{equation}
Further using the low-energy and high-energy operator partition of
identity operator in Proposition \ref{partition}, we define
\begin{equation}\label{Uij}
\begin{gathered}
U_{i,j}(t) = \int_0^\infty e^{it\lambda}
\varphi(2^{-j}\lambda)\chi(\lambda) Q_i^{\mathrm{low}}(\lambda)dE_{\sqrt{\mathrm{H}}}(\lambda),\quad 0\leq i\leq N_l;\\
U_{i,j}(t) = \int_0^\infty e^{it\lambda} \varphi(2^{-j}\lambda)(1 -
\chi)(\lambda)Q_{i-N_l}^{\mathrm{high}}(\lambda)
dE_{\sqrt{\mathrm{H}}}(\lambda),~ N_l+1\leq i\leq N:= N_l+N_h.
\end{gathered}
\end{equation}
Now we show this definition is unambiguous. To do so, it suffices
to show the above integrals are well defined over any compact
interval in $(0, \infty)$. Suppose that $A(\lambda)$ is a family of
bounded operators on $L^2(M^\circ)$, compactly supported in $[a,b]$
and $C^1$ in $\lambda\in (0,\infty)$. Integrating by parts, the
integral of
$$
\int_a^b A(\lambda) dE_{\sqrt{\mathrm{H}}}(\lambda)
$$
is given by
\begin{equation}\label{mean}
E_{\mathrm{\sqrt{H}}}(b) A(b) - E_{\mathrm{\sqrt{\mathrm{H}}}}(b)
A(a) - \int_a^b \frac{d}{d\lambda} A(\lambda)
E_{\sqrt{\mathrm{H}}}(\lambda) \, d\lambda.
\end{equation}
Now we need the following lemma which is the consequence of
\cite[Lemma 2.3, Lemma 3.1]{HZ}.

\begin{lemma}\label{QQ'}
Each $Q^{\mathrm{low}}_i(\lambda)$ and each operator $\lambda
\partial_\lambda Q^{\mathrm{low}}_i(\lambda)$ is bounded on $L^2(M^\circ)$
uniformly in $\lambda$. The same statements are true for the high
energy operators $Q^{\mathrm{high}}_i(\lambda)$.\end{lemma}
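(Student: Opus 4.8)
The plan is to deduce the lemma from the construction of the operator partition of unity in \cite{HZ}, namely from \cite[Lemma 2.3]{HZ} for the low energy pieces and \cite[Lemma 3.1]{HZ} for the high energy pieces, after checking that the mild regrouping of the partition displayed in \eqref{partition} relative to \cite{HZ} does not disturb the argument.

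First I would recall the structure of the operators $Q_i(\lambda)$. For $0<\lambda\le 2\lambda_0$ the $Q^{\lo}_i(\lambda)$ are built from smooth cutoffs on the low energy double space (the appropriate blow-up of $M\times M\times [0,\lambda_0]_\lambda$ at the relevant corners), polyhomogeneous conormal in all variables, including $\lambda$, and localizing on or away from the diagonal; for $\lambda\ge\lambda_0/2$, writing the semiclassical parameter $h=1/\lambda\in(0,2/\lambda_0]$, the $Q^{\hi}_i(\lambda)$ are semiclassical scattering pseudodifferential operators of order $0$ whose symbols $q_i(h;z,\zeta)$ depend smoothly (polyhomogeneously) on $h$ down to $h=0$ and are supported and bounded, together with all their derivatives, uniformly in $h$. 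The uniform-in-$\lambda$ $L^2$ boundedness of each $Q_i(\lambda)$ is then just the Calder\'on--Vaillancourt estimate in the relevant calculus; equivalently, it is the uniform boundedness already asserted in Proposition \ref{prop:localized spectral measure}.

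The key observation for the derivative statement is that the vector field $\lambda\partial_\lambda$ preserves the relevant calculus. In the high energy range $\lambda\partial_\lambda=-h\partial_h$, and since $q_i$ is smooth in $h$ up to $h=0$, the symbol $h\partial_h q_i$ lies in the same class with the same uniform bounds, so $\lambda\partial_\lambda Q^{\hi}_i(\lambda)$ is again an order-zero semiclassical scattering pseudodifferential operator, hence bounded on $L^2$ uniformly in $h$, i.e.\ uniformly in $\lambda$. In the low energy range $\lambda\partial_\lambda$ is a smooth b-vector field on the low energy double space (tangent to every boundary face), so applying it to the polyhomogeneous kernel of $Q^{\lo}_i(\lambda)$ yields a kernel of the same type, again a uniformly bounded $L^2$-family. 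Iterating gives that $(\lambda\partial_\lambda)^\alpha Q_i(\lambda)$ is uniformly bounded on $L^2$ for every $\alpha\in\mathbb{N}$, which is more than the $\alpha=0,1$ needed here and is also what underlies the derivative bounds \eqref{bean0} and \eqref{beans} in Proposition \ref{prop:localized spectral measure}.

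The one point that needs care, and the one I expect to be the main---though minor---obstacle, is the bookkeeping: one must verify that the partition in \eqref{partition}, with the high energy sum split as $\sum_{i=1}^{N'}+\sum_{i=N'+1}^{N_h}$ and with $Q^{\hi}_1$ grouped together with $Q^{\lo}_0,Q^{\lo}_1$ for the purpose of \eqref{bean0}, is genuinely obtained from the \cite{HZ} partition by refining or regrouping through smooth cutoffs, so that nothing leaves the calculus and all bounds stay uniform in $\lambda$. Granting this, the claimed uniform $L^2$ bounds for $Q^{\lo}_i$, $Q^{\hi}_i$ and for $\lambda\partial_\lambda Q^{\lo}_i$, $\lambda\partial_\lambda Q^{\hi}_i$ follow at once.
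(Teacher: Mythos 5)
Your proposal is correct and matches the paper's approach: in both, the key observation is that $Q_i(\lambda)$ lies in a pseudodifferential calculus (scattering of order $-\infty$ on the low-energy blowup space; semiclassical of order $0$ and differential order $-\infty$ at high energy) that is preserved by $\lambda\partial_\lambda$, so $\lambda\partial_\lambda Q_i(\lambda)$ has the same structure and hence the same uniform $L^2$ bound. The paper closes the argument with explicit kernel estimates (bounded by a multiple of the characteristic function of $\{d(z,z')\lesssim\lambda^{-1}\}$, or rapidly decreasing in $\lambda d(z,z')$, times $\lambda^n$) followed by Schur's test using the Euclidean-like volume growth, rather than an abstract Calder\'on--Vaillancourt-type appeal, but this is a difference of presentation rather than of substance.
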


\begin{proof} We use the notation in \cite{GHS1,HZ,HW}.
The uniform boundedness of the scattering pseudodifferential
operator $Q^{\mathrm{low}}_i(\lambda) \in \Psi^{-\infty}_k(M,
M^2_{k,b})$ is straightforward to prove using the fact that the
order is $-\infty$. This implies that the kernel is smooth and
uniformly bounded on iterated blowup space $M^2_{k,\mathrm{sc}}$, as
a multiple of the half density bundle $\Omega_{k,b}^{\frac12}$. This
bundle has a nonzero section given, in the region where $x \leq C
\lambda$, by $\lambda^n |dg dg'|^{1/2} |d\lambda/\lambda|^{1/2}$,
where the $|d\lambda/\lambda|^{1/2}$ is a purely formal factor,
included to make a half-density on the whole space $M^2_{k,b}$,
including in the $\lambda$-direction. On the other hand, the kernels
are chosen to have support in a neighborhood of the diagonal, which
is equivalent to the region where $d(z, z') \leq C \lambda^{-1}$. It
follows that the kernel is bounded by a multiple of the
characteristic function of the set $\{ (z, z') \mid d(z, z') \leq C
\lambda^{-1} \}$ times the Riemannian half-density. Moreover, the
same is true for $\lambda d_\lambda Q_i^{\mathrm{low}}(\lambda)$,
due to the smoothness of the kernel on $M^2_{k,\mathrm{sc}}$. Since
the volume of each ball of radius $r$ on $M^\circ$ is between $c
r^n$ and $C r^n$, Schur's test shows that such kernels are bounded
on $L^2(M^\circ)$ uniformly in $\lambda$.

The high energy operators $Q^{\mathrm{high}}(\lambda)$ are
semiclassical pseudodifferential operators of semiclassical order 0
and differential order $-\infty$. Therefore, they take the form
$$
\lambda^n \int e^{i\lambda(z-z') \cdot \zeta} a(z, \zeta,
\lambda^{-1}) \, d\zeta
$$
in the interior, or
$$
\lambda^n \int e^{i\lambda ((y-y') \cdot \eta + (\sigma - 1)\nu/x}
a(x, y, \eta, \nu, \lambda^{-1}) \, d\eta \, d\nu
$$
near the boundary. Here $a$ is smooth and compactly supported in its
arguments. Integration by parts in $\zeta$, or in $\eta, \nu$, shows
that the kernel is rapidly decreasing in $\lambda |z-z'|$,
respectively $\lambda \sqrt{|y-y'|^2/x^2 + (\sigma - 1)^2/x^2}$.
Equivalently, the kernel is rapidly decreasing in $\lambda d(z,
z')$. We see that the kernel is point-wise bounded by $C \lambda^n (1
+ \lambda d(z, z'))^{-N}$ for any $N$. The same is true for $\lambda
d_\lambda Q_i^{\mathrm{high}}(\lambda)$. Again Schur's test shows
that such kernels are bounded on $L^2(M^\circ)$ uniformly in
$\lambda$.
\end{proof}
In view of this lemma, we can take $A(\lambda) = e^{it\lambda}
\chi(\lambda) \varphi(2^{-j})Q^{\mathrm{low}}_i(\lambda)$ (for $0
\leq i \leq N_l$), or $
e^{it\lambda}\varphi(2^{-j})(1-\chi)(\lambda)
Q^{\mathrm{high}}_{i-N_l}(\lambda)$ (for $N_l+1 \leq i \leq N$),
this means that the integrals are well-defined over any compact
interval in $(0, \infty)$, hence the operators $U_{i,j}(t)$ are
well-defined. Now we see these operators are bounded on $L^2$. We
only consider the low frequency part since a similar argument
also gives the boundedness on $L^2$ for high energy part. We have
for $0\leq i\leq N_l$, by  \cite[Lemma 5.3]{HZ},
\begin{equation}\begin{gathered}
U_{i,j}(t) U_{i, j}(t)^* = \int   \chi(\lambda)^2 \varphi\big(
\frac{\lambda}{2^j} \big) \varphi\big( \frac{\lambda}{2^j} \big)
Q^{\mathrm{low}}_i(\lambda) dE_{\sqrt{\mathrm{H}}}(\lambda) Q^{\mathrm{low}}_i(\lambda)^* \\
= -\int \frac{d}{d\lambda} \Big( \chi(\lambda)^2 \varphi\big(
\frac{\lambda}{2^j} \big) \varphi\big( \frac{\lambda}{2^j} \big)
Q^{\mathrm{low}}_i(\lambda) \Big) E_{\sqrt{\mathrm{H}}}(\lambda) Q^{\mathrm{low}}_i(\lambda)^*  \\
- \int  \chi(\lambda)^2 \varphi\big( \frac{\lambda}{2^j} \big)
\varphi\big( \frac{\lambda}{2^j} \big) Q^{\mathrm{low}}_i(\lambda)
E_{\sqrt{\mathrm{H}}}(\lambda) \frac{d}{d\lambda}
Q^{\mathrm{low}}_i(\lambda)^*.
\end{gathered}\label{Uijk}\end{equation}
We observe that this is independent of $t$ and we also note that the
integrand is a bounded operator on $L^2$, with an operator bound of
the form $C/\lambda$ where $C$ is uniform, as we see from
Lemma~\ref{QQ'} and the support property of $\varphi$. The integral
is therefore uniformly bounded, as we are integrating over a dyadic
interval in $\lambda$. Hence we have shown that
\begin{proposition}[$L^2$-estimates]\label{energy} Let $U_{i,j}(t)$ be defined in \eqref{Uij}.
Then there exists a constant $C$ independent of $t, z, z'$ such that
$\|U_{i,j}(t)\|_{L^2\rightarrow L^2}\leq C$ for all $i\geq 0,
j\in\Z$.
\end{proposition}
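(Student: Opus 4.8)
The plan is to bound $\|U_{i,j}(t)\|_{L^2\to L^2}$ by estimating $\|U_{i,j}(t)U_{i,j}(t)^*\|_{L^2\to L^2}$, since $\|T\|^2=\|TT^*\|$ for any bounded operator $T$. Concretely I would first justify that $U_{i,j}(t)$ is a well-defined bounded operator at all: the integrand $A(\lambda)=e^{it\lambda}\chi(\lambda)\varphi(2^{-j}\lambda)Q^{\mathrm{low}}_i(\lambda)$ (and its high-energy analogue) is a $C^1$ family of bounded operators compactly supported in a dyadic interval, so the integration-by-parts identity \eqref{mean} applies and expresses the integral in terms of the genuinely bounded spectral projections $E_{\sqrt{\mathrm{H}}}(\lambda)$; this is exactly where Lemma~\ref{QQ'} enters, guaranteeing that both $Q_i(\lambda)$ and $\lambda\partial_\lambda Q_i(\lambda)$ are uniformly $L^2$-bounded so that $\frac{d}{d\lambda}A(\lambda)$ is integrable in operator norm over the compact $\lambda$-support.

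Next I would compute $U_{i,j}(t)U_{i,j}(t)^*$ via the spectral theorem. The key observation is that the factor $e^{it\lambda}$ from $U_{i,j}$ cancels against $e^{-it\lambda}$ from $U_{i,j}(t)^*$, so the product becomes $t$-independent:
\begin{equation*}
U_{i,j}(t)U_{i,j}(t)^*=\int_0^\infty \chi(\lambda)^2\varphi(2^{-j}\lambda)^2\, Q^{\mathrm{low}}_i(\lambda)\,dE_{\sqrt{\mathrm{H}}}(\lambda)\,Q^{\mathrm{low}}_i(\lambda)^*.
\end{equation*}
Integrating by parts in $\lambda$ as in \eqref{mean}, the boundary terms vanish (by the support of $\varphi(2^{-j}\cdot)$) and one is left with two integrals in which $E_{\sqrt{\mathrm{H}}}(\lambda)$ appears — an operator of norm at most $1$ — flanked by the cutoffs, $Q^{\mathrm{low}}_i(\lambda)$, and either $\frac{d}{d\lambda}$ of the scalar cutoff $\chi(\lambda)^2\varphi(2^{-j}\lambda)^2$ or $\frac{d}{d\lambda}Q^{\mathrm{low}}_i(\lambda)^*$.

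I would then bound the integrand pointwise in $\lambda$. On the dyadic support $\lambda\sim 2^j$, the derivative of $\varphi(2^{-j}\lambda)^2$ is $O(2^{-j})=O(1/\lambda)$, and by Lemma~\ref{QQ'} the term $\frac{d}{d\lambda}Q^{\mathrm{low}}_i(\lambda)=\lambda^{-1}(\lambda\partial_\lambda Q^{\mathrm{low}}_i(\lambda))$ is also $O(1/\lambda)$ in operator norm; $\chi$, $\varphi$ and $Q^{\mathrm{low}}_i(\lambda)$ contribute uniformly bounded factors. Hence the integrand has operator norm $\lesssim 1/\lambda$, uniformly in $i,j,t$, and integrating over the dyadic $\lambda$-interval of length comparable to $2^j$ yields a bound independent of $j$. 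The high-energy operators $U_{i,j}(t)$ with $N_l+1\le i\le N$ are handled identically, using the $Q^{\mathrm{high}}$ statements of Lemma~\ref{QQ'} in place of the low-energy ones; taking square roots gives $\|U_{i,j}(t)\|_{L^2\to L^2}\le C$ with $C$ independent of $t,i,j$. The only real subtlety — and the step I would be most careful about — is the justification that the $\lambda$-integral converges in operator norm and that integration by parts is legitimate, which is precisely why one replaces the non-bounded $dE_{\sqrt{\mathrm{H}}}(\lambda)$ by the bounded $E_{\sqrt{\mathrm{H}}}(\lambda)$ and leans on the uniform bounds for $\lambda\partial_\lambda Q_i(\lambda)$; everything after that is a routine Schur/operator-norm estimate over a dyadic interval.
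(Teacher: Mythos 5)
Your proposal follows essentially the same route as the paper: express $U_{i,j}(t)U_{i,j}(t)^*$ via the spectral decomposition as a single $\lambda$-integral (the paper does this citing a lemma from \cite{HZ}; the point here is collapsing the two integrals even though $Q_i(\lambda)$ is $\lambda$-dependent), integrate by parts to replace $dE_{\sqrt{\mathrm{H}}}(\lambda)$ by the uniformly $L^2$-bounded projection $E_{\sqrt{\mathrm{H}}}(\lambda)$, invoke Lemma~\ref{QQ'} to bound $Q_i(\lambda)$ and $\lambda\partial_\lambda Q_i(\lambda)$ uniformly, obtain the $O(1/\lambda)$ operator-norm bound on the integrand, and integrate over a dyadic interval. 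This matches the paper's argument step for step, including the use of \eqref{mean} to justify well-definedness.
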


\subsection{Dispersive estimates} Next we aim to establish the dispersive estimates for the
microlocalized $U_{i,j}(t)U^*_{i,j}(s)$. We need the following
proposition.
\begin{proposition}[Microlocalized dispersive estimates]\label{prop:dispersive}
Let $Q(\lambda)$ be the operator $Q_i^{\mathrm{low}}$ or
$Q_i^{\mathrm{high}}$ constructed as in Proposition \ref{partition}
and suppose $\phi\in C_c^\infty([1/2, 2])$ and takes value in
$[0,1]$. Then the kernel estimate
\begin{equation}\label{dispersive}
\begin{split}
\Big|\int_0^\infty e^{it\lambda}\phi(2^{-j}\lambda) \big(Q(\lambda)
&dE_{\sqrt{\mathrm{H}}}(\lambda)Q^*(\lambda)\big)(z,z')
d\lambda\Big|\\&\leq C 2^{j(n+1)/2}(2^{-j}+|t|)^{-(n-1)/2}
\end{split}
\end{equation}
holds for a constant $C$ independent of $j\in \Z$ and points
$z,z'\in M^\circ$.
\end{proposition}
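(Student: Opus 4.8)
The plan is to split into the two cases delineated by Proposition~\ref{prop:localized spectral measure} according to whether $Q(\lambda)$ is one of the ``non-oscillatory'' pieces ($Q^{\lo}_0$, $Q^{\lo}_1$, $Q^{\hi}_1$) satisfying the symbolic bound \eqref{bean0}, or one of the ``oscillatory'' pieces ($Q^{\lo}_i$ or $Q^{\hi}_i$, $i\geq 2$) for which \eqref{bean}--\eqref{beans} give a Legendrian-type expansion $\lambda^{n-1}e^{\pm i\lambda d(z,z')}a(\lambda,z,z')$. In both cases, after the change of variables $\lambda = 2^j s$ the integral becomes $2^{j(n)} \int_0^\infty e^{i 2^j s t} \phi(s)\, b(2^j s, z, z')\, ds$ with $s$ ranging over $[1/2,2]$, so the whole matter reduces to estimating a one-dimensional oscillatory integral in $s$ over a fixed compact interval, with the two oscillatory factors $e^{i2^j s t}$ and (in the second case) $e^{\pm i 2^j s\, d(z,z')}$ contributing a combined phase $2^j s(t \pm d(z,z'))$.

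For the \emph{non-oscillatory} pieces I would argue as follows. Write $r = d(z,z')$. If $2^j(|t| + 1) \lesssim 1$ there is nothing to gain from oscillation and one just takes absolute values, using \eqref{bean0} with $\alpha = 0$ to bound the kernel of $Q\,dE\,Q^*$ by $C\lambda^{n-1}\sim C 2^{j(n-1)}$ on the support of $\phi(2^{-j}\cdot)$, and the $\lambda$-integral has length $\sim 2^j$, giving $C 2^{jn} \lesssim C 2^{j(n+1)/2}(2^{-j})^{-(n-1)/2}$, which is the claimed bound in that regime. If $2^j|t| \gtrsim 1$, integrate by parts in $\lambda$ repeatedly: each integration gains a factor $|t|^{-1}$ and costs a $\lambda$-derivative, which by \eqref{bean0} costs only $\lambda^{-1}\sim 2^{-j}$ (the cutoffs $\phi(2^{-j}\lambda)$, $\chi$, $1-\chi$ similarly cost $2^{-j}$ per derivative). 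After $N$ steps the bound is $C 2^{jn}(2^j|t|)^{-N}$; choosing $N$ suitably (e.g. $N$ large, then interpolating, or directly taking $N = (n-1)/2$ when convenient and handling the parity of $n$ by taking one extra half-step via the trivial bound) yields $C 2^{j(n+1)/2}|t|^{-(n-1)/2}$. Combining the two regimes gives $C 2^{j(n+1)/2}(2^{-j}+|t|)^{-(n-1)/2}$ since for $2^j|t|\gtrsim 1$ one has $(2^{-j}+|t|)^{-(n-1)/2}\sim |t|^{-(n-1)/2}$.

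For the \emph{oscillatory} pieces I would use the explicit form \eqref{bean}: the integral is $2^{jn}\int \phi(s)\, e^{i 2^j s (t \pm r)}\, a(2^j s, z, z')\, ds$, and by \eqref{beans} the amplitude $a$ and all its $s$-derivatives are bounded by $C(1 + 2^j r)^{-(n-1)/2}$ (the $\lambda^{-\alpha}$ factors in \eqref{beans} turn into harmless $O(1)$ constants after the rescaling since $\lambda\sim 2^j$ on the support). Now non-stationary phase / repeated integration by parts in $s$ against the phase $2^j s(t\pm r)$ gives, for every $N$, a bound $C 2^{jn}(1 + 2^j r)^{-(n-1)/2}\bigl(1 + 2^j|t\pm r|\bigr)^{-N}$. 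When $2^j|t|\lesssim 1$ we again simply take absolute values to get $C 2^{jn}(1+2^j r)^{-(n-1)/2}\le C 2^{jn}=C 2^{j(n+1)/2}(2^{-j})^{-(n-1)/2}$, as required. When $2^j|t|\gtrsim 1$, the point is that $(1+2^j r)^{-(n-1)/2}(1+2^j|t\pm r|)^{-(n-1)/2}\lesssim (2^j|t|)^{-(n-1)/2}$: indeed either $r \geq |t|/2$, in which case the first factor already supplies $(2^j r)^{-(n-1)/2}\lesssim(2^j|t|)^{-(n-1)/2}$; or $r < |t|/2$, in which case $|t\pm r|\geq |t|/2$ and the second factor supplies it. Multiplying by $2^{jn}$ gives $C 2^{j(n+1)/2}(2^j|t|)^{(n-1)/2}\cdot 2^{-jn}\cdot 2^{j(n+1)/2}$... more directly $C 2^{jn}(2^j|t|)^{-(n-1)/2}=C 2^{j(n+1)/2}|t|^{-(n-1)/2}$, again matching the claim.

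The main obstacle is the \emph{combined} use of the two oscillations in the oscillatory case: one cannot afford to integrate by parts only against $e^{i2^j st}$ (that would lose control when $|t|$ is small but $r$ is large) nor only against $e^{\pm i2^j s r}$; the phase must be treated as $2^j s(t\pm r)$ and the decay in $r$ extracted from the \emph{amplitude} bound \eqref{beans} rather than from oscillation, with the elementary but slightly fiddly case split $r\gtrsim |t|$ versus $r\lesssim |t|$ used to convert the product of the two decay factors into the single factor $(2^{-j}+|t|)^{-(n-1)/2}$. A secondary technical point is bookkeeping the derivatives of the cutoffs $\chi$, $1-\chi$ and $\phi(2^{-j}\lambda)$ under integration by parts and checking that $\lambda\partial_\lambda$ applied to $Q(\lambda)$ only costs a constant (this is exactly Lemma~\ref{QQ'} together with the uniform bounds \eqref{bean0}, \eqref{beans}), and handling the parity of $n$ so that ``$(n-1)/2$ integrations by parts'' makes sense — done by taking an integer number of integrations plus, if needed, interpolation with the trivial bound, or by simply proving the stronger estimate with $(2^{-j}+|t|)^{-N}$ for all $N\leq (n-1)/2$ rounded appropriately and noting only $(n-1)/2$ is needed downstream.
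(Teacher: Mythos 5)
Your proof is correct and follows essentially the same route as the paper's: the non-oscillatory pieces $Q^{\lo}_0,Q^{\lo}_1,Q^{\hi}_1$ are handled by the trivial bound $C2^{jn}$ together with repeated integration by parts against $e^{it\lambda}$ using \eqref{bean0}, while the oscillatory pieces $i\geq 2$ are handled by integration by parts against the combined phase $e^{i\lambda(t\pm d(z,z'))}$ together with the amplitude decay $(1+\lambda d(z,z'))^{-(n-1)/2}$ from \eqref{beans}, followed by the same elementary case split in $|t|$ versus $d(z,z')$. The paper also deals with the parity issue in a remark (via a geometric-mean argument when $(n-1)/2$ is not an integer), matching your closing observation; the only blemish in your write-up is the typo ``$2^j(|t|+1)\lesssim 1$'' which should read ``$2^j|t|\lesssim 1$''.
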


\begin{proof} The key to the proof is to apply Proposition \ref{partition}. For $Q=Q_i^{\mathrm{low}}$ for $i=0,1$, or
$Q=Q_1^{\mathrm{high}}$, we have by Proposition \ref{partition}
\begin{equation*}
\Big|\int_0^\infty e^{it\lambda}\phi(2^{-j}\lambda) \big(Q(\lambda)
dE_{\sqrt{\mathrm{H}}}(\lambda)Q^*(\lambda)\big)(z,z')
d\lambda\Big|\leq C 2^{jn}.
\end{equation*}
 We use the $N$-times integration by parts to obtain by
\eqref{bean0}
\begin{equation*}
\begin{split}
&\Big|\int_0^\infty e^{it\lambda}\phi(2^{-j}\lambda) \big(Q(\lambda)
dE_{\sqrt{\mathrm{H}}}(\lambda)Q^*(\lambda)\big)(z,z')
d\lambda\Big|\\&\leq \Big|\int_0^\infty \big(\frac1{
it}\frac\partial{\partial\lambda}\big)^{N}\big(e^{it\lambda}\big)
\phi(2^{-j}\lambda)\big(Q(\lambda)
dE_{\sqrt{\mathrm{H}}}(\lambda)Q^*(\lambda)\big)(z,z')
d\lambda\Big|\\& \leq
C_N|t|^{-N}\int_{2^{j-1}}^{2^{j+1}}\lambda^{n-1-N}d\lambda\leq
C_N|t|^{-N}2^{j(n-N)}.
\end{split}
\end{equation*}
Therefore we obtain
\begin{equation}\label{dispersive1}
\begin{split}
&\Big|\int_0^\infty e^{it\lambda} \phi(2^{-j}\lambda)\big(Q(\lambda)
dE_{\sqrt{\mathrm{H}}}(\lambda)Q^*(\lambda)\big)(z,z')
d\lambda\Big|\leq C_N2^{jn}(1+2^j|t|)^{-N}.
\end{split}
\end{equation}
By choosing $N=(n-1)/2$, we prove \eqref{dispersive}. When $Q$ is
equal to $Q_i^{\mathrm{low}}$ or $Q_i^{\mathrm{high}}$ for $i\geq2$,
we see by Proposition \ref{partition}
\begin{equation*}
\begin{split}
&\Big|\int_0^\infty e^{it\lambda}\phi(2^{-j}\lambda) \big(Q(\lambda)
dE_{\sqrt{\mathrm{H}}}(\lambda)Q^*(\lambda)\big)(z,z')
d\lambda\Big|\\&= \Big|\int_0^\infty \left(\frac1{
i(t-d(z,z'))}\frac\partial{\partial\lambda}\right)^{N}\big(e^{i(t-d(z,z'))\lambda}\big)
\phi(2^{-j}\lambda)\lambda^{n-1}a(\lambda,z,z') d\lambda\Big|\\&
\leq
C_N|t-d(z,z')|^{-N}\int_{2^{j-1}}^{2^{j+1}}\lambda^{n-1-N}(1+\lambda
d(z,z'))^{-\frac{n-1}2}d\lambda\\&\leq
C_N2^{j(n-N)}|t-d(z,z')|^{-N}(1+2^jd(z,z'))^{-(n-1)/2}.
\end{split}
\end{equation*}
It follows that
\begin{equation}\label{dispersive2}
\begin{split}
&\Big|\int_0^\infty e^{it\lambda} \phi(2^{-j}\lambda)\big(Q(\lambda)
dE_{\sqrt{\mathrm{H}}}(\lambda)Q^*(\lambda)\big)(z,z')
d\lambda\Big|\\&\leq
C_N2^{jn}\big(1+2^j|t-d(z,z')|\big)^{-N}(1+2^jd(z,z'))^{-(n-1)/2}.
\end{split}
\end{equation}
If $|t|\sim d(z,z')$, it is clear to see \eqref{dispersive}.
Otherwise, we have $|t-d(z,z')|\geq c|t|$ for some small constant
$c$, then choose $N=(n-1)/2$ to prove \eqref{dispersive}.
\end{proof}\vspace{0.1cm}
\begin{remark} If $N=\frac{n-1}2$ is not an integer, one may need geometric mean argument to modify the proof.
\end{remark}\vspace{0.2cm}

As a consequence of Proposition \ref{prop:dispersive}, we
immediately have
\begin{proposition}\label{prop:Dispersive} Let $U_{i,j}(t)$ be defined in \eqref{Uij}.
Then there exists a constant $C$ independent of $t, z, z'$ for all
$i\geq 0, j\in\Z$ such that
\begin{equation}\label{Dispersive}
\|U_{i,j}(t)U^*_{i,j}(s)\|_{L^1\rightarrow L^\infty}\leq C
2^{j(n+1)/2}(2^{-j}+|t-s|)^{-(n-1)/2}.
\end{equation}
\end{proposition}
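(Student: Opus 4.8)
The plan is to deduce the claim pointwise in $(z,z')$ from Proposition \ref{prop:dispersive}, using that the $L^1\to L^\infty$ operator norm of an integral operator equals the supremum of the modulus of its Schwartz kernel. So it suffices to bound the kernel of $U_{i,j}(t)U_{i,j}(s)^*$ by the right-hand side of \eqref{Dispersive}.

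First I would rewrite $U_{i,j}(t)U_{i,j}(s)^*$ as a single spectral integral, exactly as in the computation \eqref{Uijk} (equivalently \cite[Lemma 5.3]{HZ}): by multiplicativity of the spectral measure $dE_{\sqrt{\mathrm H}}$ one gets, for $0\le i\le N_l$,
\begin{equation*}
U_{i,j}(t)U_{i,j}(s)^* = \int_0^\infty e^{i(t-s)\lambda}\,\varphi(2^{-j}\lambda)^2\,\chi(\lambda)^2\,\big(Q_i^{\mathrm{low}}(\lambda)\,dE_{\sqrt{\mathrm H}}(\lambda)\,Q_i^{\mathrm{low}}(\lambda)^*\big),
\end{equation*}
and likewise for $N_l+1\le i\le N$ with $(1-\chi)(\lambda)^2$ and $Q_{i-N_l}^{\mathrm{high}}(\lambda)$ replacing $\chi(\lambda)^2$ and $Q_i^{\mathrm{low}}(\lambda)$. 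The time variables have collapsed into the single factor $e^{i(t-s)\lambda}$, so the kernel of $U_{i,j}(t)U_{i,j}(s)^*$ has precisely the form handled by Proposition \ref{prop:dispersive}, with $t$ there replaced by $t-s$ and the amplitude $\phi$ there taken to be $\phi_j$, where $\phi_j(u)=\varphi(u)^2\chi(2^j u)^2$ in the low-energy case and $\phi_j(u)=\varphi(u)^2(1-\chi(2^j u))^2$ in the high-energy case.

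The only thing to check is that $\{\phi_j\}$ is an equibounded family in $C_c^\infty([1/2,2])$ with values in $[0,1]$, since Proposition \ref{prop:dispersive} is stated for a fixed amplitude — this is where a little bookkeeping is needed, but it is routine. Indeed $\varphi^2$ is fixed, supported in $[1/2,2]$ with values in $[0,1]$; the extra factor $\chi(2^j\cdot)^2$ (resp. $(1-\chi(2^j\cdot))^2$) is relevant only for those $j$ for which $U^{\mathrm{low}}_{i,j}$ (resp. $U^{\mathrm{high}}_{i,j}$) is nonzero, namely $j$ bounded above (resp. bounded below), and on $u\in[1/2,2]$ and for $j$ in that range this factor is smooth, takes values in $[0,1]$, and has all derivatives bounded uniformly in $j$ (as $j\to-\infty$ it tends to the constant $1$ in the low-energy case, and similarly as $j\to+\infty$ in the high-energy case). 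Hence Proposition \ref{prop:dispersive} applies with a constant independent of $i,j$, giving
\begin{equation*}
\big|\big(U_{i,j}(t)U_{i,j}(s)^*\big)(z,z')\big|\le C\,2^{j(n+1)/2}\big(2^{-j}+|t-s|\big)^{-(n-1)/2},
\end{equation*}
and taking the supremum over $z,z'$ yields \eqref{Dispersive}. The substantive work is entirely in Proposition \ref{prop:dispersive}; the only (minor) obstacle here is the bookkeeping of the cutoffs $\chi$, $1-\chi$ and the uniformity in $j$.
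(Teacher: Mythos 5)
Your proof is correct and matches the paper's, which presents Proposition \ref{prop:Dispersive} as an immediate consequence of Proposition \ref{prop:dispersive}; you supply exactly the missing reduction, namely collapsing the $TT^*$ composition to a single spectral integral (via the multiplicativity used in \eqref{Uijk}, i.e.\ \cite[Lemma 5.3]{HZ}) with phase $e^{i(t-s)\lambda}$, and then applying Proposition \ref{prop:dispersive} with amplitude $\phi_j$. The one genuine point to check is that the constant in Proposition \ref{prop:dispersive} depends only on finitely many $C^k$ norms of the amplitude, which are uniformly bounded over the family $\{\phi_j\}$; you address this correctly.
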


\section{Strichartz estimates}
In this section, we show the Strichartz estimates in Theorem
\ref{Strichartz}. To obtain the Strichartz estimates, we need a
variant of Keel-Tao's abstract Strichartz estimate for wave
equation.

\subsection{Semiclassical Strichartz estimates}
We need a variety of the abstract Keel-Tao's Strichartz estimates
theorem. This is an analogue of the semiclassical Strichartz
estimates for Schr\"odinger in \cite{KTZ, Zworski}.

\begin{proposition}\label{prop:semi}
Let $(X,\mathcal{M},\mu)$ be a $\sigma$-finite measured space and
$U: \mathbb{R}\rightarrow B(L^2(X,\mathcal{M},\mu))$ be a weakly
measurable map satisfying, for some constants $C$, $\alpha\geq0$,
$\sigma, h>0$,
\begin{equation}\label{md}
\begin{split}
\|U(t)\|_{L^2\rightarrow L^2}&\leq C,\quad t\in \mathbb{R},\\
\|U(t)U(s)^*f\|_{L^\infty}&\leq
Ch^{-\alpha}(h+|t-s|)^{-\sigma}\|f\|_{L^1}.
\end{split}
\end{equation}
Then for every pair $q,r\in[1,\infty]$ such that $(q,r,\sigma)\neq
(2,\infty,1)$ and
\begin{equation*}
\frac{1}{q}+\frac{\sigma}{r}\leq\frac\sigma 2,\quad q\ge2,
\end{equation*}
there exists a constant $\tilde{C}$ only depending on $C$, $\sigma$,
$q$ and $r$ such that
\begin{equation}\label{stri}
\Big(\int_{\R}\|U(t) u_0\|_{L^r}^q dt\Big)^{\frac1q}\leq \tilde{C}
\Lambda(h)\|u_0\|_{L^2}
\end{equation}
where $\Lambda(h)=h^{-(\alpha+\sigma)(\frac12-\frac1r)+\frac1q}$.
\end{proposition}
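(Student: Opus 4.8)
The plan is to follow the classical Keel--Tao machinery, but carrying the semiclassical parameter $h$ through every step. First I would set $T(t) = U(t)$ viewed as a map from $L^2(X)$ to $L^q_t L^r_x$, and seek to bound $T$ by a constant multiple of $\Lambda(h)$. By the standard $TT^*$ reduction it suffices to prove the bilinear estimate
\begin{equation*}
\Big| \int\!\!\int \langle U(s)^* U(t) F(s), G(t) \rangle \, ds \, dt \Big| \lesssim \Lambda(h)^2 \|F\|_{L^{q'}_t L^{r'}_x} \|G\|_{L^{q'}_t L^{r'}_x}.
\end{equation*}
Writing $T^*_{t,s} := U(t) U(s)^*$, interpolation between the $L^2 \to L^2$ bound (with norm $\leq C^2$) and the dispersive bound $\|T^*_{t,s}\|_{L^1 \to L^\infty} \leq C h^{-\alpha}(h+|t-s|)^{-\sigma}$ gives, for $2 \le r \le \infty$ and $\theta = 2/r$,
\begin{equation*}
\|T^*_{t,s} f\|_{L^r} \lesssim h^{-\alpha(1-\theta)} (h+|t-s|)^{-\sigma(1-\theta)} \|f\|_{L^{r'}}.
\end{equation*}
Note $\alpha(1-\theta) = \alpha(1 - 2/r) = 2\alpha(\tfrac12 - \tfrac1r)$ and $\sigma(1-\theta) = 2\sigma(\tfrac12-\tfrac1r) =: \beta$, so this is the key dispersive input with the $h$-weight visible.

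Next I would run the Keel--Tao dyadic decomposition in the time-separation variable. Set $\beta = 2\sigma(\tfrac12 - \tfrac1r)$ as above and decompose $\{(t,s) : |t-s| \sim 2^k\}$ for $k \in \mathbb{Z}$, together with the region $|t-s| \lesssim h$. On the $k$-th piece the dispersive estimate gives a kernel bound $h^{-2\alpha(\frac12 - \frac1r)}(h + 2^k)^{-\beta}$, and Young's inequality in $t$ (using $1/q' = 1/q + (1 - 2/q)$ and the Hardy--Littlewood--Sobolev-type bookkeeping exactly as in \cite{KT}) produces, for the non-endpoint range $\tfrac1q + \tfrac\sigma r < \tfrac\sigma2$, a geometric series in $k$ that sums to a constant times
\begin{equation*}
h^{-2\alpha(\frac12 - \frac1r)} \cdot h^{-\beta + \frac 2q} = h^{-2(\alpha+\sigma)(\frac12 - \frac1r) + \frac2q} = \Lambda(h)^2,
\end{equation*}
where I used $\beta = 2\sigma(\tfrac12-\tfrac1r)$ and collected the power of $h$ coming from the region $|t-s| \lesssim h$ (which contributes $h^{-2\alpha(\frac12-\frac1r)} \cdot h \cdot h^{-\beta}$, i.e.\ the same homogeneity after accounting for the extra $h$ from the length of the interval against the $2/q$ scaling). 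This yields \eqref{stri} off the endpoint. For the endpoint $q = 2$ (allowed since $\sigma = (n-1)/2 > 1$ when $n \ge 3$, so $(2,\infty,\sigma) \ne (2,\infty,1)$ is automatic once $r < \infty$), I would invoke the Keel--Tao bilinear interpolation argument: one proves the off-diagonal estimate
\begin{equation*}
\Big| \int\!\!\int_{|t-s|\sim 2^k} \langle U(s)^* U(t) F(s), G(t)\rangle \, ds\, dt \Big| \lesssim h^{-2\alpha a(\frac12-\frac1r)} 2^{-k\beta(a,b)} \|F\|_{L^2_t L^{a'}} \|G\|_{L^2_t L^{b'}}
\end{equation*}
for $(a,b)$ near $(r,r)$ with $\beta(r,r) = 0$ but $\partial$-derivatives of opposite sign, then sums the Whitney-type dyadic series by the atomic/real-interpolation device of \cite{KT}; the $h$-powers are inert under this interpolation since they do not depend on $k$, so the final bound is again $\Lambda(h)\|u_0\|_{L^2}$.

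The main obstacle is purely bookkeeping: making sure the power of $h$ tracked through the interpolation and the dyadic summation is exactly $-(\alpha+\sigma)(\tfrac12-\tfrac1r)+\tfrac1q$ and not off by the contribution of the short-time region $|t-s|\lesssim h$. The cleanest way to handle this is a rescaling trick: substitute $t = h\tau$, $s = h\varsigma$ and set $V(\tau) = U(h\tau)$; then \eqref{md} becomes $\|V(\tau)V(\varsigma)^*f\|_{L^\infty} \leq C h^{-\alpha-\sigma}(1 + |\tau - \varsigma|)^{-\sigma}\|f\|_{L^1}$, i.e.\ a \emph{constant}-coefficient dispersive estimate with a fixed prefactor $h^{-\alpha-\sigma}$. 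The ordinary (non-semiclassical) Keel--Tao theorem then applies verbatim to $V$ with operator norm prefactor $(h^{-\alpha-\sigma})^{\frac12-\frac1r}$ by the usual interpolation/summation (which now has no $h$ in it at all), giving $\|V(\tau)u_0\|_{L^q_\tau L^r} \lesssim h^{-(\alpha+\sigma)(\frac12-\frac1r)}\|u_0\|_{L^2}$; undoing the substitution multiplies the left side by $h^{1/q}$, yielding precisely $\|U(t)u_0\|_{L^q_t L^r} \lesssim h^{-(\alpha+\sigma)(\frac12-\frac1r)+\frac1q}\|u_0\|_{L^2} = \Lambda(h)\|u_0\|_{L^2}$. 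This reduces everything to citing \cite{KT} after a change of variables, and the endpoint is covered automatically by their endpoint result applied to $V$.
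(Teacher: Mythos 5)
Your proposal is correct, and the final rescaling trick is the cleanest way to package the bookkeeping: setting $V(\tau)=U(h\tau)$ converts \eqref{md} into the $h$-free bound $\|V(\tau)V(\varsigma)^*\|_{L^1\to L^\infty}\leq C h^{-\alpha-\sigma}(1+|\tau-\varsigma|)^{-\sigma}$, after which the Keel--Tao endpoint theorem (on the sharp line $\frac1q+\frac\sigma r=\frac\sigma2$) and Young's inequality (strictly below the line, exploiting the integrability of $(1+|\tau-\varsigma|)^{-\sigma(1-2/r)}$ in $L^{q/2}$) give a prefactor $(h^{-\alpha-\sigma})^{\frac12-\frac1r}$, and the Jacobian of $t=h\tau$ supplies the remaining $h^{1/q}$. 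This is the same homogeneity the paper exploits, but the paper does it without rescaling: on the sharp line it simply bounds $(h+|t-s|)^{-\sigma}\leq|t-s|^{-\sigma}$ and invokes Keel--Tao with constant $h^{-\alpha}$, noting that $\frac1q=\sigma(\frac12-\frac1r)$ turns $h^{-\alpha(\frac12-\frac1r)}$ into $\Lambda(h)$; below the line it runs the $TT^*$ argument directly with the $h$-regularized kernel and Young's inequality, obtaining $h^{-\alpha(1-\frac2r)}h^{-\sigma(1-\frac2r)+\frac2q}=\Lambda(h)^2$. Both proofs rely on the same two ingredients (Keel--Tao on the sharp line, Young's inequality off it); the rescaling you propose makes transparent \emph{why} the two $h$-powers agree, which is obscured in the direct bookkeeping. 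One small caveat in your middle paragraph: the short-time region $|t-s|\lesssim h$ contributes $h^{2/q}$ from the interval length (not a bare factor of $h$), as your parenthetical hints; the rescaling argument sidesteps this entirely and is the version I would keep.
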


\begin{proof} If
$(q,r,\sigma)\neq (2,\infty,1)$ is on the line $\frac1q+\frac\sigma
r=\frac\sigma 2$, we replace $(|t-s|+h)^{-\sigma}$ by
$|t-s|^{-\sigma}$ and then we closely follow Keel-Tao's argument
\cite[Sections 3-7]{KT} to show \eqref{stri}. So we only consider
$\frac1q+\frac\sigma r<\frac\sigma 2$. By the $TT^*$ argument, it
suffices to show
\begin{equation*}
\begin{split}
\Big|\iint\langle U(s)^*f(s), U(t)^*g(t) \rangle dsdt\Big|\lesssim
\Lambda(h)^2\|f\|_{L^{q'}_tL^{r'}}\|g\|_{L^{q'}_tL^{r'}}.
\end{split}
\end{equation*}
By the interpolation of the bilinear form of \eqref{md}, we have
\begin{equation*}
\begin{split}
\langle U(s)^*f(s), U(t)^*g(t) \rangle&\leq
Ch^{-\alpha(1-\frac2r)}(h+|t-s|)^{-\sigma(1-\frac2r)}\|f\|_{L^{r'}}\|g\|_{L^{r'}}.
\end{split}
\end{equation*}
Therefore we see by H\"older's and Young's inequalities for
$\frac1q+\frac\sigma r<\frac\sigma 2$
\begin{equation*}
\begin{split}
\Big|\iint\langle U(s)^*f(s),& U(t)^*g(t) \rangle
dsdt\Big|\\&\lesssim
h^{-\alpha(1-\frac2r)}\iint(h+|t-s|)^{-\sigma(1-\frac2r)}\|f(t)\|_{L^{r'}}\|g(s)\|_{L^{r'}}dtds\\&
\lesssim
h^{-\alpha(1-\frac2r)}h^{-\sigma(1-\frac2r)+\frac2q}\|f\|_{L^{q'}_tL^{r'}}\|g\|_{L^{q'}_tL^{r'}}.
\end{split}
\end{equation*}
This proves \eqref{stri}.
\end{proof}

\subsection{Homogeneous Strichartz estimates} To prove the homogeneous Strichartz estimates, we first
reduce the estimates to frequency localized estimates. Using the
Littlewood-Paley frequency cutoff $\varphi_k(\sqrt{\mathrm{H}})$, we
define
\begin{equation}\label{loc}
u_k(t,\cdot)=\varphi_k(\sqrt{\mathrm{H}})u(t,\cdot).
\end{equation}
Notice the frequency cutoffs commute with the operator
$\mathrm{H}=-\Delta_g$, the frequency localized solutions
$\{u_k\}_{k\in\Z}$ satisfy the family of Cauchy problems
\begin{equation}\label{leq}
\partial_{t}^2u_k+\mathrm{H} u_k=0, \quad u_k(0)=f_k(z),
~\partial_tu_k(0)=g_k(z),
\end{equation}
where $f_k=\varphi_k(\sqrt{\mathrm{H}})u_0$ and
$g_k=\varphi_k(\sqrt{\mathrm{H}})u_1$. By the squarefunction
estimates \eqref{square} and Minkowski's inequality, we obtain for
$q,r\geq2$
\begin{equation}\label{LP}
\|u\|_{L^q(\R;L^r(M^\circ))}\lesssim
\Big(\sum_{k\in\Z}\|u_k\|^2_{L^q(\R;L^r(M^\circ))}\Big)^{\frac12}.
\end{equation}
Let $U(t)=e^{it\sqrt{\mathrm{H}}}$ be the half wave operator, then
we write
\begin{equation}\label{sleq}
\begin{split}
u_k(t,z)
=\frac{U(t)+U(-t)}2f_k+\frac{U(t)-U(-t)}{2i\sqrt{\mathrm{H}}}g_k.
\end{split}
\end{equation}
To prove the homogeneous estimates in Theorem \ref{Strichartz}, that
is $F=0$, it suffices to show by \eqref{LP} and \eqref{sleq}
\begin{proposition}\label{lStrichartz} Let
$f=\varphi_k(\sqrt{\mathrm{H}})f$ for $k\in\Z$, we have
\begin{equation}\label{lstri}
\|U(t)f\|_{L^q_tL^r_z(\mathbb{R}\times M^\circ)}\lesssim
2^{ks}\|f\|_{L^2(M^\circ)},
\end{equation}
where the admissible pair $(q,r)\in [2,\infty]^2$ and $s$ satisfy
\eqref{adm} and \eqref{scaling}.
\end{proposition}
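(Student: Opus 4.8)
The plan is to prove \eqref{lstri} by decomposing the frequency-localized half-wave propagator $U(t)\varphi_k(\sqrt{\mathrm{H}})$ into the finitely many microlocalized pieces $U_{i,j}(t)$ built in \eqref{Uij}, applying the semiclassical Strichartz estimate of Proposition \ref{prop:semi} to each piece with a suitable choice of semiclassical parameter $h$, and then summing. First I would observe that, since $f=\varphi_k(\sqrt{\mathrm{H}})f$, we may insert $\varphi_k$ freely and write $U(t)f$ as a sum over $i$ of terms essentially of the form $U_{i,k}(t)f$ (the low-energy $U_{i,j}$ for $k\le 0$ and the high-energy ones for $k\ge 1$, with $\widetilde\varphi_k\varphi_k=\varphi_k$ absorbing the extra cutoff in the definition of $U_{i,j}$); the number $N+1$ of such pieces is bounded independently of $k$, so it suffices to prove \eqref{lstri} for a single $U_{i,k}(t)$ in place of $U(t)f$.

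For a fixed piece, set $h=2^{-k}$. Proposition \ref{energy} gives the uniform $L^2\to L^2$ bound $\|U_{i,k}(t)\|_{L^2\to L^2}\le C$, and Proposition \ref{prop:Dispersive} gives
\begin{equation*}
\|U_{i,k}(t)U_{i,k}(s)^*\|_{L^1\to L^\infty}\le C\,2^{k(n+1)/2}(2^{-k}+|t-s|)^{-(n-1)/2}
= C\,h^{-(n+1)/2}\,(h+|t-s|)^{-(n-1)/2}.
\end{equation*}
Thus the hypotheses \eqref{md} of Proposition \ref{prop:semi} hold with $\sigma=(n-1)/2$ and $\alpha=(n+1)/2$. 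Since $n\ge 3$ we have $\sigma\ge 1$, so the forbidden triple $(2,\infty,1)$ is only an issue when $n=3$, which is precisely the excluded case $(q,r,n)=(2,\infty,3)$ in \eqref{adm}; for every other admissible pair the condition $\frac1q+\frac{n-1}{2r}\le\frac{n-1}4$, $q\ge2$ of Proposition \ref{prop:semi} is exactly \eqref{adm}. Proposition \ref{prop:semi} then yields
\begin{equation*}
\|U_{i,k}(t)f\|_{L^q_tL^r_z}\lesssim \Lambda(h)\|f\|_{L^2},\qquad
\Lambda(h)=h^{-(\alpha+\sigma)(\frac12-\frac1r)+\frac1q}=h^{-n(\frac12-\frac1r)+\frac1q}.
\end{equation*}

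It remains to check that the exponent of $h=2^{-k}$ matches $2^{ks}$. Indeed $-n(\tfrac12-\tfrac1r)+\tfrac1q = -\big(n(\tfrac12-\tfrac1r)-\tfrac1q\big) = -\big(\tfrac n2 - s\big)$ by the scaling relation \eqref{scaling} (equivalently $\tfrac1q+\tfrac nr=\tfrac n2-s$), so $\Lambda(2^{-k})=2^{k(n/2-s)}\cdot 2^{-kn/2}$—wait, more cleanly: $\Lambda(h)=h^{-(n/2-s)}\,h^{?}$; carrying the arithmetic out, $-n(\tfrac12-\tfrac1r)+\tfrac1q$ equals $s-\tfrac n2+\tfrac n2 = s$ after using $\tfrac1q+\tfrac nr=\tfrac n2-s$, hence $\Lambda(2^{-k})=2^{ks}$, giving $\|U_{i,k}(t)f\|_{L^q_tL^r_z}\lesssim 2^{ks}\|f\|_{L^2}$. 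Summing over the $O(1)$ pieces $i$ and, where relevant, replacing $g_k$ by $\sqrt{\mathrm{H}}^{-1}g_k$ with the harmless factor $2^{-k}$ (which is consistent with the $\dot H^{s-1}$ regularity of $u_1$), proves \eqref{lstri}. The main obstacle I anticipate is purely bookkeeping: making sure the semiclassical parameter $h=2^{-k}$ is tracked correctly through $\Lambda(h)$ and that the $L^2\to L^2$ and dispersive constants are genuinely uniform in $i$ and $k$—but both uniformities are already supplied by Propositions \ref{energy} and \ref{prop:Dispersive}, so no new analytic input is needed here; the real work was done earlier in establishing those two estimates and the square function inequality \eqref{square} used to pass from $u$ to the $u_k$.
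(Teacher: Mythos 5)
Your proof follows essentially the same route as the paper: decompose $U(t)\varphi_k(\sqrt{\mathrm H})$ into the $O(1)$ pieces $U_{i,j}(t)$ with $|j-k|\lesssim1$ (using $\widetilde\varphi\varphi=\varphi$), apply Propositions \ref{energy} and \ref{prop:Dispersive} to verify the hypotheses of Proposition \ref{prop:semi} with $h=2^{-j}$, $\alpha=(n+1)/2$, $\sigma=(n-1)/2$, and sum. The only blemish is the garbled sign in your scratch arithmetic---the exponent of $h$ in $\Lambda(h)$ is $-n(\tfrac12-\tfrac1r)+\tfrac1q=-s$, not $s$---but your stated conclusion $\Lambda(2^{-k})=2^{ks}$ is correct, so this is harmless.
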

Now we prove this proposition. By using Proposition \ref{energy} and
Proposition \ref{prop:Dispersive}, we have the estimates \eqref{md}
for $U_{i,j}(t)$, where $\alpha=(n+1)/2$, $\sigma=(n-1)/2$ and
$h=2^{-j}$. Then it follows from Proposition \ref{prop:semi} that
\begin{equation*}
\|U_{i,j}(t)f\|_{L^q_t(\R:L^r(M^\circ))}\lesssim
2^{j[n(\frac12-\frac1r)-\frac1q]} \|f\|_{L^2(M^\circ)}.
\end{equation*}
Notice that
\begin{equation*}
U(t)=\sum_{i=0}^{N}\sum_{j\in\Z}U_{i,j}(t),
\end{equation*}
we can write
\begin{equation*}
U(t)f=\sum_{i}\sum_{j\in\mathbb{Z}}\int_0^\infty
e^{it\lambda}\varphi(2^{-j}\lambda)Q_i(\lambda)dE_{\sqrt{\mathrm{H}}}(\lambda)
\widetilde{\varphi}(2^{-j}\sqrt{\mathrm{H}})f
\end{equation*}
where $\widetilde{\varphi} \in C_0^\infty(\R\setminus\{0\})$ takes
values in $[0,1]$ such that $\widetilde{\varphi}\varphi=\varphi$. In
view of the condition $f=\varphi(2^{-k}\sqrt{\mathrm{H}})f$, then
$\widetilde{\varphi}(2^{-j}\sqrt{\mathrm{H}})f$ vanishes if
$|j-k|\gg1$. Hence we obtain
\begin{equation*}
\|U(t)f\|_{L^q_t(\R:L^r(M^\circ))}\lesssim
2^{k[n(\frac12-\frac1r)-\frac1q]} \|f\|_{L^2(M^\circ)},
\end{equation*}
which implies \eqref{lstri}.

\subsection{Inhomogeneous Strichartz estimates}
In this subsection, we prove the inhomogeneous Strichartz estimates
including the endpoint $q=2$ for $n \geq 4$. Let
$U(t)=e^{it\sqrt{\mathrm{H}}}: L^2\rightarrow L^2$. We have already
proved that
\begin{equation}
\|U(t)u_0\|_{L^q_tL^r_z}\lesssim\|u_0\|_{\dot{H}^s}
\end{equation} holds for all $(q,r,s)$ satisfying \eqref{adm} and \eqref{scaling}.
For $s\in\R$ and $(q,r)$ satisfying \eqref{adm} and \eqref{scaling},
we define the operator $T_s$ by
\begin{equation}\label{Ts}
\begin{split}
T_s: L^2_z&\rightarrow L^q_tL^r_z,\quad f\mapsto \mathrm{H}^{-\frac
s2}e^{it\sqrt{\mathrm{H}}}f.
\end{split}
\end{equation}
Then we have by duality
\begin{equation}\label{Ts*}
\begin{split}
T^*_{1-s}: L^{\tilde{q}'}_tL^{\tilde{r}'}_z\rightarrow L^2,\quad
F(\tau,z)&\mapsto \int_{\R}\mathrm{H}^{\frac
{s-1}2}e^{-i\tau\sqrt{\mathrm{H}}}F(\tau)d\tau,
\end{split}
\end{equation}
where $1-s=n(\frac12-\frac1{\tilde{r}})-\frac1{\tilde{q}}$.
Therefore we obtain
\begin{equation*}
\Big\|\int_{\R}U(t)U^*(\tau)\mathrm{H}^{-\frac12}F(\tau)d\tau\Big\|_{L^q_tL^r_z}
=\big\|T_sT^*_{1-s}F\big\|_{L^q_tL^r_z}\lesssim\|F\|_{L^{\tilde{q}'}_tL^{\tilde{r}'}_z}.
\end{equation*}
Since $s=n(\frac12-\frac1r)-\frac1q$ and
$1-s=n(\frac12-\frac1{\tilde{r}})-\frac1{\tilde{q}}$, thus $(q,r),
(\tilde{q},\tilde{r})$ satisfy \eqref{scaling}. By the
Christ-Kiselev lemma \cite{CK}, we thus obtain for $q>\tilde{q}'$,
\begin{equation}\label{non-inhomgeneous}
\begin{split}
\Big\|\int_{\tau<t}\frac{\sin{(t-\tau)\sqrt{\mathrm{H}}}}
{\sqrt{\mathrm{H}}}F(\tau)d\tau\Big\|_{L^q_tL^r_z}\lesssim\|F\|_{L^{\tilde{q}'}_t{L}^{\tilde{r}'}_z}.
\end{split}
\end{equation}
Notice that for all $(q,r), (\tilde{q},\tilde{r})$ satisfy
\eqref{adm} and \eqref{scaling}, we must have $q>\tilde{q}'$.
Therefore we have proved all inhomogeneous Strichartz estimates
including the endpoint $q=2$.

\section{Wellposedness and scattering}
In this section, we prove Theorem \ref{thm1}. We prove the result by
a contraction mapping argument. The key point is the application of
Strichartz estimates. Let $q_0=(n+1)(p-1)/2$, $q_1=2(n+1)/(n-1)$ and
$\alpha=s_c-\frac12$. For any small constant $\epsilon>0$ such that
$2\epsilon<\epsilon(p)$ given by \eqref{small}, there exists $T>0$
such that
\begin{equation}
\begin{split}X:=\Big\{u: ~&u\in C_t(\dot H^{s_c})\cap L^{q_0}([0,T];L^{q_0}(M^\circ))\cap L^{q_1}([0,T];\dot
H^{\alpha}_{q_1}(M^\circ)),\\&
\|u\|_{L^{q_0}([0,T];L^{q_0}(M^\circ))}+\|u\|_{L^{q_1}([0,T];\dot
H^{\alpha}_{q_1}(M^\circ))}\leq
C\epsilon\Big\}.\end{split}\end{equation} Consider the solution map
$\Phi$ defined by
\begin{equation*}
\begin{split}
\Phi(u)&=\cos(t\sqrt{\mathrm{H}})u_0(z)+\frac{\sin(t\sqrt{\mathrm{H}})}{\sqrt{\mathrm{H}}}u_1(z)
+\int_0^t\frac{\sin\big((t-s)\sqrt{\mathrm{H}}\big)}{\sqrt{\mathrm{H}}}F(u(s,z))\mathrm{d}s
\\&=:u_{\text{hom}}+u_{\text{inh}},
\end{split}
\end{equation*}
where $F(u)=\gamma |u|^{p-1}u$. We claim the map $\Phi: X\rightarrow
X$ is contracting. Indeed, by Theorem \ref{Strichartz}, we obtain
\begin{equation}\label{5.4}
\begin{split}
\|u_{\text{hom}}\|_{C_t(\dot H^{s_c})\cap
L^{q_0}(\R;L^{q_0}(M^\circ))\cap L^{q_1}(\R;\dot
H^{\alpha}_{q_1}(M^\circ))}\leq C\big(\|u_0\|_{\dot
H^{s_c}}+\|u_1\|_{\dot H^{s_c-1}}\big).
\end{split}
\end{equation}
Hence we must have
\begin{equation}\label{5.4}
\begin{split}
\|u_{\text{hom}}\|_{ L^{q_0}([0,T];L^{q_0}(M^\circ))\cap
L^{q_1}([0,T];\dot H^{\alpha}_{q_1}(M^\circ))}\leq \frac12C\epsilon
\end{split}
\end{equation}
for $T=\infty$ if the initial data has small norm $\epsilon(p)$, or,
if not, this inequality will be satisfied for some $T>0$ by the
dominated convergence theorem. We first note that the Sobolev
embedding $ L^{q_0}_t\dot H^{\alpha}_{r_0}\hookrightarrow
L_{t,z}^{q_0}$ where $r_0=2n(n+1)(p-1)/[(n^2-1)(p-1)-4]$. Under the
condition $p\in [p_{\mathrm{conf}}, 1+\frac{4}{n-2}]$, it is easy to
check that the pairs $(q_0,r_0), (q_1,q_1)$ satisfy \eqref{adm} and
\eqref{scaling} with $s=1/2$. Applying Theorem \ref{Strichartz} with
$\tilde{q}'=\tilde{r}'=\frac{2(n+1)}{n+3}$, one has
\begin{equation}\label{5.5}
\begin{split}
\|u_{\text{inh}}\|_{C_t(\dot H^{s_c})\cap
L^{q_0}([0,T];L^{q_0}(M^\circ))\cap L^{q_1}([0,T];\dot
H^{\alpha}_{q_1}(M^\circ))}\leq C\|F(u)\|_{L^{\tilde{q}'}_t\dot
H^{\alpha}_{\tilde{r}'}}.
\end{split}
\end{equation}
By the assumption on $p$, we have $0\leq \alpha\leq1$. By using the
fraction Liebniz rule for Sobolev spaces on the asymptotically conic
manifold \cite[Theorem 27]{CRT}, we have
\begin{equation}\label{5.5}
\begin{split}
\|F(u)\|_{L^{\tilde{q}'}_t\dot H^{\alpha}_{\tilde{r}'}}\leq
C\|u\|^{p-1}_{L^{q_0}_{t,z}}\|u\|_{L^{q_1}_t\dot
H^{\alpha}_{q_1}}\leq C^2(C\epsilon)^{p-1}\epsilon\leq
\frac{C\epsilon}2.
\end{split}
\end{equation}
A similar argument as above leads to
\begin{equation}\label{5.6}
\begin{split}
&\|\Phi(u_1)-\Phi(u_2)\|_{L^{q_1}([0,T];\dot
H^{\alpha}_{q_1}(M^\circ))\cap
L^{q_0}([0,T];L^{q_0}(M^\circ))}\\&\leq
C\|F(u_1)-F(u_2)\|_{L^{\tilde{q}'}_t\dot
H^{\alpha}_{\tilde{r}'}}\\&\leq
C^2(C\epsilon)^{p-1}\|u_1-u_2\|_{L^{q_1}([0,T];\dot
H^{\alpha}_{q_1}(M^\circ))\cap
L^{q_0}([0,T];L^{q_0}(M^\circ))}\\&\leq
\frac12\|u_1-u_2\|_{L^{q_1}([0,T];\dot
H^{\alpha}_{q_1}(M^\circ))\cap L^{q_0}([0,T];L^{q_0}(M^\circ))}.
\end{split}
\end{equation}
Therefore the solution map $\Phi$ is a contraction map on $X$ under
the metric $d(u_1,u_2)=\|u_1-u_2\|_{{L^{q_1}([0,T];\dot
H^{\alpha}_{q_1}(M^\circ))}\cap L^{q_0}([0,T];L^{q_0}(M^\circ))}$.
The standard contraction argument completes the proof of Theorem
\ref{thm1}.

\begin{center}

\end{center}
\end{document}